\newcommand{\Eu}{{\rm{E}}_{\bd{u}}}
\newcommand{\Pxi}{{\mathbb{P}}_{\xi}}
\newcommand{\Exi}{{\mathbb{E}}_{\xi}}
\newcommand{\tauu}{\tau_{\bd{u}}}
\newcommand{\Phixi}[1]{\Phi_{\xi}^{#1}}
\newcommand{\Mxi}[1]{\mathcal{M}_{\xi}^{#1}}
\theoremstyle{plain}
\newtheorem{theorem}{Theorem}[section]
\newtheorem{lemma}[theorem]{Lemma}
\newtheorem{prop}[theorem]{Proposition}
\newtheorem{defi}[theorem]{Definition}
\theoremstyle{definition}
\newtheorem{remark}{Remark}
\def\E{{\rm{E}}}
\def\P{{\rm{P}}}
\def\R{\mathbb{R}}
\def\I#1{\mathbb{I}_{#1}}
\def\J{\mathbb{J}}
\newcommand{\bd}[1]{\mbox{\boldmath$#1$}}
\renewcommand\tilde{\widetilde}
\renewcommand\hat{\widehat}
\renewcommand\#{\sharp}
\def\Det{\underset{\substack{(s, t) \in \{ t_1, \ldots, t_M \}^2, \\ (x, y) \in \mathbb{Z}^2}}{{\mathrm{Det}}}}
\newcommand{\Psixi}{\Psi_{\xi}^{\bd{t}}[\bd{f}]}
\newcommand{\hatPhi}{\hat{\Phi}_{a\mathbb{Z}}^k}
\newcommand{\hatM}{\hat{\mathcal{M}}_{a\mathbb{Z}}^k}
\newcommand{\hatMj}{\hat{\mathcal{M}}_{a\mathbb{Z}}^j}
\title{\bf Noncolliding system \\ of continuous-time random walks}
\author{Syota Esaki\thanks{Department of Mathematics and Informatics, Faculty of Science, Chiba University, 1-33 Yayoi-cho, Inage-ku, Chiba 263-8522, Japan; e-mail: sesaki@graduate.chiba-u.jp}}
\date{\today}
\def\C{\mathbb{C}}
\def\R{\mathbb{R}}
\def\N{\mathbb{N}}
\def\Z{\mathbb{Z}}
\def\rP{{\rm P}}
\def\rE{{\rm E}}
\def\P{{\mathbb P}}
\def\E{{\mathbb E}}
\def\1{{\bf 1}}
\def\mbK{\mathbb{K}}
\def\bK{{\bf K}}
\def\J{\mathbb{J}}
\def\cS{{\cal S}}
\begin{document}

\maketitle


\begin{abstract}
The continuous-time random walk is defined as a Poissonization of discrete-time random walk. We study the noncolliding system of continuous-time simple and symmetric random walks on $\mathbb{Z}$. We show that the system is determinantal for any finite initial configuration without multiple point. The spatio-temporal correlation kernel is expressed by using the modified Bessel functions. We extend the system to the noncolliding process with an infinite number of particles, when the initial configuration has equidistant spacing of particles, and show a relaxation phenomenon to the equilibrium determinantal point process with the sine kernel.  
\end{abstract}

\section{Introduction} 

Eigenvalue distributions of Hermitian random-matrix ensembles
provide typical examples of determinantal point processes (DPPs)
on $\R$ \cite{Meh04,For10}.
The essential characteristic of a DPP is 
repulsive interaction acting between any pair of points
\cite{Sos00,ST03}.
Recently, it is clarified that such negatively correlated 
point processes are useful not only to simulate
energy-level statistics of complex quantum many-particle systems,
but also to describe statistics of sets of items that are diverse; for example,
queries of users and topics in daily news.
A variety of such real-world applications of DPPs in
the machine learning technologies is surveyed in \cite{KT12}.
In particular, the eigenvalue distribution of 
non-Hermitian random matrices called the Ginibre ensemble \cite{Gin65}
has attracted much attention both in pure mathematics and 
in applications, since it gives a DPP on a complex plane $\C$.
The Ginibre-Voronoi tessellation on the plane
has been studied \cite{Gol10}
and its advantage than the classical Poisson-Voronoi tessellation
in the applications to cellular network modeling
is reported \cite{MS12}. See \cite{DFL13} and papers cited therein for simulation algorithms of DPPs.

DPPs originally considered on $\R$ have been also extended
to the spatio-temporal plane $\R \times [0, \infty)$.
Such dynamical extensions of DPPs are called
determinantal processes \cite{KT10,Osa12,KT13,Osa13a,Osa13b}. 
Typical examples of determinantal processes are 
noncolliding diffusions including Dyson's Brownian motion
model with $\beta=2$ \cite{Dys62},
where noncolliding conditions make the systems
be negatively correlated. 
The purpose of the present paper is to introduce a discrete model
defined on a lattice $\Z$, which realizes a determinantal process.
Discretization of models will be useful in applications
in the future.


The continuous-time random walk is defined as a Poissonization of discrete-time random walk. We construct a noncolliding system of continuous-time simple and symmetric random walks on $\Z$ as an $h$-transform discussed by K\"{o}nig, O'Connell, and Roch \cite{KOR}. We show that the system has a determinantal martingale representation \cite{K.dif}. We prove that for any finite initial configuration without multiple point, $\xi(\cdot)=\sum_{j=1}^N \delta_{u_j}(\cdot)$, $u_1 < u_2 < \cdots < u_N$, $u_j \in \Z$, $1 \leq j \leq N \in \N$, the system is determinantal in the sense that all spatio-temporal correlation functions are given by determinants specified by the correlation kernel. The correlation kernel is explicitly determined as
\begin{align}
&\mathbb{K}_{\xi}(s, x; t, y) = \sum_{j=1}^N I_{|x-u_j|}(s)I_{|y-u_j|}(-t) \notag \\
&\quad + \sum_{j=1}^N \sum_{w \in \Z \setminus \{ u_k \}_{k=1}^N} I_{|x-u_j|}(s)I_{|y-w|}(-t)\prod_{\substack{1 \leq \ell \leq N, \\ \ell \neq j}} \frac{w-u_{\ell}}{u_j-u_{\ell}} -\1(s>t)I_{|x-y|}(s-t), \label{1.14}
\end{align}
$(s, x), (t, y) \in [0, \infty) \times \Z$. Here $I_{\nu}(z)$ is the modified Bessel function of the first kind of order $\nu$ defined by \cite{Wat44}
\begin{equation}
I_{\nu}(z) := \left( \frac{z}{2} \right)^{\nu} \sum_{\ell=0}^{\infty} \frac{\left( z/2 \right)^{2\ell}}{\ell!\Gamma(\nu+\ell+1)}, \quad \nu > -1, \label{Idef}
\end{equation}
and $\1(\cdot)$ is an indicator; $\1(\omega)=1$ if $\omega$ is satisfied, and $\1(\omega)=0$ otherwise. 

We extend the system to the noncolliding process with an infinite number of particles, when the initial configuration is given by 
\begin{equation}
\label{xiaZ} \xi_{a\Z}(\cdot)=\sum_{k \in \Z}\delta_{ak}(\cdot),
\end{equation}
having equidistant spacing $a \in \{ 2, 3, \ldots \}$ between particles on $\Z$. We prove that this infinite particle process is also determinantal and the correlation kernel is given by 
\begin{align} 
&\mathbb{K}_{\xi_{a\mathbb{Z}}}(s, x; t, y) \notag \\
&= \sum_{j \in \mathbb{Z}} I_{|x-aj|}(s)\frac{1}{2\pi}\int_{-\pi}^{\pi}d\lambda e^{i\lambda(y/a-j)+t\cos(\lambda/a)} -\1(s>t)I_{|x-y|}(s-t), \label{4.9}
\end{align}
$(s, x), (t, y) \in [0, \infty) \times \Z$. Moreover, we show a relaxation phenomenon to the equilibrium determinantal point process, which is governed by the sine kernel defined on $\Z$ \cite{K.non}.

The paper is organized as follows. In Section 2 we introduce continuous-time random walk and  associated martingales. We construct the noncolliding random walk using the $h$-transform in the sense of Doob in Section 3. In Section 4 we introduce a transformation $\cS$ and give the determinantal martingale representation for the noncolliding random walk. In Section 5 we give the correlation kernel of the noncolliding random walk explicitly and extend  the system to infinite particle processes. 
 
\section{Continuous-time random walk} 

\subsection{Construction} 

Let $\mathbb{Z}$ be the set of all integers and $\eta \in \mathbb{Z}$ be a random variable  with a probability measure $\sigma = (\delta_{-1}+\delta_1)/2$, that is, 
\begin{equation*}
{\rm{Prob}}[ \eta = n ] = \left\{
\begin{array}{rl}
\displaystyle{\frac{1}{2}}, \qquad & n=\pm 1, \cr
& \cr
0, \qquad & n \in \Z \setminus \{-1,1\}.
\end{array} \right.
\label{eqn:eta1}
\end{equation*}
The characteristic function of $\sigma$ is then given by 
\begin{equation}
\label{sigmacha} \hat{\sigma}(z)=\int_{\mathbb{R}}e^{iz\eta}\sigma(d\eta) = \cos z, \quad z \in \mathbb{C}, \  i=\sqrt{-1}. 
\end{equation}
We consider a continuous-time simple and symmetric random walk on $\mathbb{Z}$, which is denoted by $V(t), t \in [0, \infty)$. It is defined as a compound Poisson process such that its characteristic function $\psi_{V(t)}(z)$ is given by \cite{Sat99}, 
\begin{align}
\psi_{V(t)}(z) &:= \rE[e^{izV(t)}] \notag \\
&= \sum_{j=0}^{\infty} e^{-t}\frac{t^j}{j!}\left( \hat{\sigma}(z) \right)^j \notag \\
&= \exp(t(\hat{\sigma}(z)-1)), \quad z \in \mathbb{C}. \label{phiV}
\end{align}
In other words, the present continuous-time random walk is a Poissonization of discrete-time simple and symmetric random walk. In this paper, this process on $\mathbb{Z}$ is simply denoted by RW. 

By definition the generator of RW is given by
\begin{equation*}
L_1 f = \frac{f(x+1)+f(x-1)-2f(x)}{2}
\end{equation*}
for suitable functions $f$. 

\begin{lemma}
For $V( \cdot )$, the transition probability is given by 
\begin{equation}
\label{1.1} p(t, y|x) = \frac{1}{2\pi}\int_{-\pi}^{\pi}dk e^{ik(y-x)} e^{-(1-\cos k)t}, \quad t \in [0, \infty), \ x, y \in \mathbb{Z}. 
\end{equation}
\end{lemma}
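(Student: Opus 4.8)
The plan is to recover the transition probability from the characteristic function \eqref{phiV} by Fourier inversion on the lattice $\Z$. First I would use spatial homogeneity of the random walk: since the increments are i.i.d.\ and $\Z$-valued, $p(t,y|x)=\rP[V(t)=y-x]$ when $V(0)=0$, so it suffices to determine the one-point distribution of $V(t)$ and then substitute $n=y-x$. Setting $z=k\in[-\pi,\pi]$ in \eqref{phiV} and using $\hat{\sigma}(k)=\cos k$ from \eqref{sigmacha} gives the real-valued characteristic function $\psi_{V(t)}(k)=\exp(t(\cos k-1))=e^{-(1-\cos k)t}$.

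Next I would invoke the Fourier-inversion formula for integer-valued random variables, namely $\rP[V(t)=n]=\frac{1}{2\pi}\int_{-\pi}^{\pi}e^{-ikn}\psi_{V(t)}(k)\,dk$, which is the statement that the probability mass function is the sequence of Fourier coefficients of the continuous $2\pi$-periodic function $k\mapsto\psi_{V(t)}(k)$. This identity follows from the orthogonality relation $\frac{1}{2\pi}\int_{-\pi}^{\pi}e^{ik(m-n)}\,dk=\1(m=n)$ together with the expansion $\psi_{V(t)}(k)=\sum_{m\in\Z}\rP[V(t)=m]e^{ikm}$, the interchange of sum and integral being justified by $\sum_{m\in\Z}\rP[V(t)=m]=1<\infty$ and dominated convergence. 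Substituting $\psi_{V(t)}(k)=e^{-(1-\cos k)t}$ and $n=y-x$ yields $p(t,y|x)=\frac{1}{2\pi}\int_{-\pi}^{\pi}e^{-ik(y-x)}e^{-(1-\cos k)t}\,dk$; since $\cos k$ is even, the change of variable $k\mapsto -k$ replaces $e^{-ik(y-x)}$ by $e^{ik(y-x)}$ without altering the rest, giving exactly \eqref{1.1}.

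Alternatively, and perhaps more in the spirit of the Poissonization viewpoint emphasized in \eqref{phiV}, I would expand the compound Poisson process by conditioning on the number $j$ of jumps up to time $t$, so that $\rP[V(t)=n]=\sum_{j=0}^{\infty}e^{-t}\frac{t^j}{j!}\rP[S_j=n]$, where $S_j$ denotes the $j$-step discrete simple and symmetric random walk. Inserting the elementary $j$-step formula $\rP[S_j=n]=\frac{1}{2\pi}\int_{-\pi}^{\pi}(\cos k)^j e^{-ikn}\,dk$ and summing the series $\sum_{j\geq 0}e^{-t}\frac{(t\cos k)^j}{j!}=e^{-(1-\cos k)t}$ reproduces the same integral. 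I do not expect a serious obstacle here: the only point requiring care is the interchange of summation and integration, which is controlled by $|\cos k|\le 1$ through dominated convergence, so either route makes the claim rigorous.
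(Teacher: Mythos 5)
Your proof is correct, but it takes a different route from the paper. The paper does not invert the characteristic function: it writes down the forward (master) equation $\frac{d}{dt}p(t,y|x)=\frac{1}{2}[p(t,y-1|x)+p(t,y+1|x)-2p(t,y|x)]$, observes that $\phi_k(y)=e^{iky}$ is an eigenfunction of the co-generator $L_1^*=L_1$ with eigenvalue $\cos k-1$, and checks that the integral \eqref{1.1} is a superposition of such eigenmodes satisfying the initial condition $p(0,y|x)=\delta_{x,y}$, concluding by uniqueness of the solution to this system of ODEs. Your first argument instead leans entirely on the compound-Poisson construction \eqref{phiV}: you read off $\psi_{V(t)}(k)=e^{-(1-\cos k)t}$ from \eqref{sigmacha}, apply Fourier inversion for $\Z$-valued variables (justified by orthogonality of $e^{ikm}$ on $[-\pi,\pi]$ and dominated convergence), and use evenness of $\cos k$ to match the sign convention in \eqref{1.1}. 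This buys you a self-contained derivation that does not require invoking existence/uniqueness for the infinite system of difference-differential equations, which the paper asserts rather than proves; the paper's generator argument, on the other hand, is the one that generalizes when the walk is defined by its generator rather than by an explicit L\'evy--Khintchine formula. Your second, Poissonization route (conditioning on the number of jumps and summing $\sum_{j\ge 0}e^{-t}\frac{(t\cos k)^j}{j!}$) is essentially the computation the paper performs later, in the proof of its Bessel-function representation $p(t,y|x)=e^{-t}I_{|y-x|}(t)$, so it is consistent with the paper even though it is not the proof given for this particular lemma. Both of your interchanges of sum and integral are adequately justified by the bounds you state.
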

\begin{proof}
We consider the co-generator of $L_1$, which is denoted by $L_1^*$. We can see easily $L_1=L_1^*$. Therefore the transition probability of RW is a unique solution of the difference equation
\begin{equation*}
\frac{d}{dt}p(t, y|x) = \frac{1}{2}[p(t, y-1|x)+p(t, y+1|x)-2p(t, y|x)], \quad t \in [0, \infty), \ x, y \in \mathbb{Z}
\end{equation*}
with the initial condition $p(0, y|x)=\delta_{x, y}$. Since the eigenfunction of $L_1^*$ is $\phi_k(x)=e^{ikx}$ with the eigenvalue $\lambda_k=\cos k-1$, $k \in \mathbb{R}$, the integral (\ref{1.1}) solves the differential equation. It is obvious that the initial condition is satisfied by (\ref{1.1}). Then the proof is completed. 
\end{proof}

Using the modified Bessel function (\ref{Idef}) we can give another representation to $p$. 

\begin{lemma} \label{prob.dens}
For $t \in [0, \infty)$, $x, y \in \mathbb{Z}$, 
\begin{equation}
\label{Bessel} p(t, y|x) = e^{-t}I_{|y-x|}(t).
\end{equation}
\end{lemma}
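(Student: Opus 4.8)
The plan is to start from the integral representation of $p$ established in the preceding lemma, namely \eqref{1.1}, and to recognize the resulting Fourier-type integral as an integral representation of the modified Bessel function. Writing $m := y - x \in \Z$ and factoring $e^{-(1-\cos k)t} = e^{-t}\,e^{t\cos k}$ in \eqref{1.1}, the claim \eqref{Bessel} becomes equivalent to the identity
\begin{equation*}
\frac{1}{2\pi}\int_{-\pi}^{\pi}dk\, e^{ikm}\,e^{t\cos k} = I_{|m|}(t), \qquad m \in \Z,\ t \in [0,\infty).
\end{equation*}
Since the $e^{-t}$ prefactor is already isolated, everything reduces to evaluating this single integral.

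To evaluate it, I would invoke the generating-function expansion of the modified Bessel functions \cite{Wat44},
\begin{equation*}
\exp\!\left(\frac{z}{2}\Bigl(u + u^{-1}\Bigr)\right) = \sum_{n \in \Z} I_n(z)\, u^n,
\end{equation*}
specialized to $u = e^{ik}$, for which $\frac{z}{2}(u+u^{-1}) = z\cos k$, giving $e^{t\cos k} = \sum_{n\in\Z} I_n(t) e^{ink}$. Substituting this into the integral and exchanging summation and integration, the orthogonality relation $\frac{1}{2\pi}\int_{-\pi}^{\pi} e^{ik(m+n)}\,dk = \delta_{n,-m}$ collapses the series to the single term $I_{-m}(t)$. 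The identity $I_{-n}=I_n$ for integer $n$ then yields $I_{-m}(t) = I_{|m|}(t)$, completing the argument.

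The only point requiring care---and the main (though minor) obstacle---is the legitimacy of interchanging the infinite sum with the integral. This is justified by uniform convergence: for fixed $t \geq 0$ the Bessel functions satisfy $I_n(t) \geq 0$ and $\sum_{n\in\Z} I_n(t) = e^{t} < \infty$ (the generating function at $u=1$), so $|I_n(t) e^{ink}| = I_n(t)$ provides a summable majorant independent of $k$, and the Weierstrass M-test applies on $[-\pi,\pi]$. As an independent check one may avoid the generating function altogether: set $q(t,y|x) := e^{-t}I_{|y-x|}(t)$, use $I_{-n}=I_n$ to write the index with sign, and verify from the three-term recurrence $I_{n-1}(t)+I_{n+1}(t)=2I_n'(t)$ that $q$ solves the same difference--differential equation as in the previous lemma, together with the initial condition $q(0,y|x)=\delta_{x,y}$ coming from $I_0(0)=1$ and $I_\nu(0)=0$ for $\nu>0$; uniqueness of the solution then forces $q=p$.
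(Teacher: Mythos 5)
Your proof is correct, but it is organized differently from the paper's. Both arguments start from the integral representation \eqref{1.1} and factor out $e^{-t}$, reducing everything to the evaluation of $\frac{1}{2\pi}\int_{-\pi}^{\pi}dk\, e^{ik(y-x)}e^{t\cos k}$. The paper evaluates this integral by a self-contained computation: it expands $e^{t\cos k}=\sum_{n\geq 0}\frac{t^n}{n!}(\cos k)^n$, converts the $k$-integral into a contour integral over the unit circle via $z=e^{ik}$, applies the binomial theorem and the residue theorem to produce a Kronecker delta, and resums the resulting double series to recover exactly the defining series \eqref{Idef} of $I_{y-x}(t)$ (after reducing to $y\geq x$ by the symmetry of \eqref{1.1}). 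You instead quote the classical generating-function identity $e^{t\cos k}=\sum_{n\in\Z}I_n(t)e^{ink}$ from Watson and read off the Fourier coefficient by orthogonality, with a clean justification of the sum--integral interchange via the majorant $\sum_{n\in\Z}I_n(t)=e^{t}$. These are essentially the same identity approached from opposite ends: the paper's contour calculation is in effect a derivation of the Fourier coefficients of $e^{t\cos k}$ directly from \eqref{Idef}, whereas you import that fact as known, which makes your argument shorter but less self-contained. One small point: you invoke $I_{-n}=I_n$ for negative integer orders, which is not covered by the paper's definition \eqref{Idef} (stated only for $\nu>-1$); the paper sidesteps this by assuming $y\geq x$, and you could do the same by noting that the integral is even in $m=y-x$ since $e^{t\cos k}$ is even in $k$. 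Your secondary verification --- checking via the three-term recurrence that $e^{-t}I_{|y-x|}(t)$ solves the Kolmogorov forward equation with the correct initial condition and appealing to uniqueness --- is a genuinely independent route that the paper does not take, and it is equally valid.
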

\begin{proof}
By symmetry of the RHS of (\ref{1.1}), $p(t, y|x)=p(t, x|y)$, and hence we can assume $y \geq x$ without loss of generality. We see 
\begin{align}
p(t, y|x) &= e^{-t} \frac{1}{2\pi} \int_{-\pi}^{\pi} dk e^{-ik(y-x)}\sum_{n=0}^{\infty}\frac{t^n}{n!}(\cos k)^n \notag \\
&= e^{-t} \sum_{n=0}^{\infty} \frac{t^n}{n!} \frac{1}{2\pi} \int_{-\pi}^{\pi} dk \left( e^{-ik} \right)^{y-x} \left( \frac{e^{ik}+e^{-ik}}{2} \right)^n. \label{realint}
\end{align}
We rewrite this integral by $k$ into a contour integral by $z=e^{ik}$ along a unit circle. Then (\ref{realint}) is equal to 
\begin{align*}
&e^{-t} \sum_{n=0}^{\infty} \frac{t^n}{n!} \frac{1}{2\pi} \oint \frac{dz}{iz} z^{y-x} \frac{1}{2^n} \left( z+\frac{1}{z} \right)^n \\
&= e^{-t} \sum_{n=0}^{\infty} \frac{t^n}{n!} \frac{1}{2^n} \sum_{\ell=0}^n \binom{n}{\ell} \frac{1}{2\pi i} \oint dz z^{y-x-1+2\ell-n} \notag \\
&= e^{-t} \sum_{\ell=0}^{\infty} \sum_{n=\ell}^{\infty} \left( \frac{t}{2} \right)^n \frac{1}{\ell!(n-\ell)!}\delta_{n, y-x+2\ell} \\
&= 
e^{-t} \left( \frac{t}{2} \right)^{y-x} \sum_{\ell=0}^{\infty} \frac{(t/2)^{2\ell}}{\ell!\Gamma(y-x+\ell+1)} 
= \text{RHS of (\ref{Bessel})}.  
\end{align*}
Thus the proof is completed. 
\end{proof}

\subsection{Associated martingales} 

We introduce a filtration $\{ \mathcal{F}_t : t \in [0, \infty) \}$ for RW defined by $\mathcal{F}_t = \sigma( V(s) : 0 \leq s \leq t)$. 

We perform the Esscher transform with parameter $\alpha \in \mathbb{R}$, $V(\cdot) \to \tilde{V}_{\alpha}(\cdot)$ as 
\begin{equation*}
\tilde{V}_{\alpha}(t) = \frac{e^{\alpha V(t)}}{{\rm{E}}[e^{\alpha V(t)}]}, \quad t \in [0, \infty). 
\end{equation*}
By ${\rm{E}}[e^{\alpha V(t)}] = \psi_{V(t)}(i\alpha) = \exp \{ t(\cosh \alpha-1) \}$,
we have 
\begin{equation*}
\label{2.1} \tilde{V}_{\alpha}(t) = G_{\alpha}(t, V(t))
\end{equation*}
with 
\begin{equation}
\label{2.2} G_{\alpha}(t, x) = \exp \left\{ \alpha x - t( \cosh \alpha -1 ) \right\}, \quad t \in [0, \infty), \quad x \in \mathbb{Z}. 
\end{equation}
\begin{lemma}
$G_{\alpha}(t, V(t))$ is an $\mathcal{F}_t$-martingale for any $\alpha \in \mathbb{R}$. 
\end{lemma}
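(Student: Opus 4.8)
The plan is to exploit the fact that $V(\cdot)$ is a compound Poisson process, hence a L\'evy process on $\Z$ with independent and stationary increments, and that $G_{\alpha}$ is exactly the normalized exponential associated to $\alpha$, so that its expectation is constantly one. Three things must be verified: adaptedness, integrability, and the conditional-expectation identity. Adaptedness is immediate, since $G_{\alpha}(t, V(t))$ is a deterministic function of $V(t)$ and $\mathcal{F}_t = \sigma(V(s) : 0 \le s \le t)$.

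For integrability I would invoke the moment generating function already recorded above, namely $\rE[e^{\alpha V(t)}] = \psi_{V(t)}(i\alpha) = \exp\{t(\cosh\alpha - 1)\}$, which is the value of the characteristic function \eqref{phiV} at $z = i\alpha$. This gives $\rE[G_{\alpha}(t, V(t))] = e^{-t(\cosh\alpha-1)} \rE[e^{\alpha V(t)}] = 1 < \infty$ for every $t \in [0,\infty)$ and every $\alpha \in \R$, so $G_{\alpha}(t, V(t))$ is integrable.

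The main step is the identity $\rE[G_{\alpha}(t, V(t)) \mid \mathcal{F}_s] = G_{\alpha}(s, V(s))$ for $0 \le s \le t$. Here I would decompose $V(t) = V(s) + (V(t) - V(s))$ and factor $G_{\alpha}(t, V(t)) = e^{\alpha V(s) - s(\cosh\alpha-1)} \, e^{\alpha(V(t)-V(s)) - (t-s)(\cosh\alpha-1)}$. The first factor equals $G_{\alpha}(s, V(s))$ and is $\mathcal{F}_s$-measurable; the second is a function of the increment $V(t) - V(s)$, which by the independent-increments property is independent of $\mathcal{F}_s$ and by stationarity is distributed as $V(t-s)$. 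Pulling the $\mathcal{F}_s$-measurable factor out of the conditional expectation and evaluating the remaining term gives $\rE[e^{\alpha V(t-s)}] \, e^{-(t-s)(\cosh\alpha-1)} = 1$, and hence the identity.

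The only point genuinely requiring care, rather than being a true obstacle, is the justification of the independence and stationarity of the increments of $V(\cdot)$. This is inherited from its construction as a Poissonization of the discrete-time random walk (equivalently, from the L\'evy--Khintchine form of $\psi_{V(t)}$ in \eqref{phiV}); once this structural fact is invoked, the remaining computation is entirely routine, and the three verifications together establish the martingale property.
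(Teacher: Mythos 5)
Your proposal is correct and follows essentially the same route as the paper: decompose $V(t)=V(s)+(V(t)-V(s))$, use independence of increments to pull out the $\mathcal{F}_s$-measurable factor, and cancel the expectation of the increment term against the normalization $\rE[e^{\alpha V(t)}]=\exp\{t(\cosh\alpha-1)\}$. The only (harmless) difference is that you spell out adaptedness and integrability and invoke stationarity to identify the increment with $V(t-s)$, whereas the paper simply cancels $\rE[e^{\alpha(V(t)-V(s))}]$ in a ratio without needing stationarity.
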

\begin{proof}
For $s < t$,  
\begin{align*}
{\rm{E}} [ \left. G_{\alpha}(t, V(t)) \right| \mathcal{F}_s ] &= \frac{{\rm{E}}[\left. e^{\alpha V(t)} \right| \mathcal{F}_s ]}{{\rm{E}}[ e^{\alpha V(t)} ]} \\
&= \frac{e^{\alpha V(s)}{\rm{E}}[ e^{\alpha (V(t)-V(s))} ]}{{\rm{E}}[ e^{\alpha V(s)} ]{\rm{E}}[ e^{\alpha (V(t)-V(s))} ]} \\
&= \frac{e^{\alpha V(s)}}{{\rm{E}}[ e^{\alpha V(s)} ]} = G_{\alpha}(s, V(s)).  
\end{align*}
Therefore, $G_{\alpha}(t, V(t))$ is an $\mathcal{F}_t$-martingale. 
\end{proof}
Expansion of (\ref{2.2}) with respect to $\alpha$ around $\alpha = 0$, 
\begin{equation}
\label{2.3} G_{\alpha}(t, x) = \sum_{n=0}^{\infty} m_n(t, x) \frac{\alpha^n}{n!}, 
\end{equation}
determines a series of polynomials of degree $n$, 
\begin{equation}
\label{2.4} m_n(t, x) = \sum_{j=0}^{n} c_n^{(j)}(t)x^j, \quad n \in \mathbb{N}_0.  
\end{equation}
For $n=0, 1, 2, 3, 4, $ they are given by   
\begin{align*}
m_0(t, x) &= 1, \\
m_1(t, x) &= x, \\
m_2(t, x) &= x^2-t, \\
m_3(t, x) &= x^3-3tx, \\
m_4(t, x) &= x^4-6tx^2+3t^2-t.
\end{align*}
They satisfy relations
\begin{equation*}
-\frac{d}{dt} m_n(t, x) = \frac{1}{2}[m_n(t, x+1) -2m_n(t, x) + m_n(t, x-1)], \quad n \in \mathbb{N}_0. 
\end{equation*}
The polynomials $\{ m_n(t, x) \}_{n \in \mathbb{N}_0}$ defined by (\ref{2.3}) are fundamental martingale polynomials in the following sense \cite{K.dif}. 
\begin{lemma}
The polynomials $\{ m_n(t, x) \}_{n \in \mathbb{N}_0}$ are given in the form (\ref{2.4}), in which 
\begin{equation}
\label{coef} c_n^{(j)}(0) = 0, \quad 0 \leq j \leq n-1, \quad c_n^{(n)}(t) \equiv 1, \quad \text{for all $n \in \mathbb{N}_0$.}
\end{equation}
That is $m_n(t, x)$'s are monic polynomials with $m_n(0, x)=x^n$. Moreover, $\{ m_n(t, V(t)) \}_{n \in \N_0}$ are $\mathcal{F}_t$-martingales, $t \in [0, \infty)$. 
\end{lemma}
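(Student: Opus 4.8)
The plan is to treat the two assertions separately, both drawn from the generating function (\ref{2.3}). The claim about the coefficients $c_n^{(j)}$ is a purely algebraic consequence of factoring $G_\alpha(t,x)$ from (\ref{2.2}) into an $x$-dependent piece and an $x$-independent piece, whereas the martingale property will be harvested from the already-established fact that $G_\alpha(t,V(t))$ is an $\mathcal{F}_t$-martingale for every fixed $\alpha$, by extracting Taylor coefficients in $\alpha$.

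First I would settle the monic and initial-value statements. Factoring $G_\alpha(t,x)=e^{\alpha x}\,e^{-t(\cosh\alpha-1)}$ and expanding the second factor as a power series in $\alpha$,
\begin{equation*}
e^{-t(\cosh\alpha-1)} = \sum_{\ell=0}^{\infty} b_\ell(t)\,\alpha^\ell,
\end{equation*}
one has $b_0(t)\equiv 1$, since the exponent vanishes at $\alpha=0$, and $b_\ell(0)=0$ for every $\ell\ge 1$, since the whole factor reduces to $1$ when $t=0$. Taking the Cauchy product with $e^{\alpha x}=\sum_{k}x^k\alpha^k/k!$ and comparing with (\ref{2.3})--(\ref{2.4}) gives
\begin{equation*}
c_n^{(j)}(t) = \frac{n!}{j!}\,b_{n-j}(t), \qquad 0\le j\le n.
\end{equation*}
Hence the leading coefficient is $c_n^{(n)}(t)=b_0(t)\equiv 1$, so each $m_n$ is monic, while at $t=0$ we obtain $c_n^{(j)}(0)=(n!/j!)\,b_{n-j}(0)=0$ for $j\le n-1$, that is $m_n(0,x)=x^n$. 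This is precisely (\ref{coef}).

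For the martingale property the key observation is $m_n(t,x)=\partial_\alpha^n G_\alpha(t,x)\big|_{\alpha=0}$, which is just (\ref{2.3}). Fixing $s<t$, I would begin from the identity ${\rm E}[\,G_\alpha(t,V(t))\mid\mathcal{F}_s\,]=G_\alpha(s,V(s))$, valid for all real $\alpha$, apply $\partial_\alpha^n$, and set $\alpha=0$. If the derivative may be interchanged with the conditional expectation, then
\begin{align*}
{\rm E}[\,m_n(t,V(t))\mid\mathcal{F}_s\,]
&= \partial_\alpha^n\,{\rm E}[\,G_\alpha(t,V(t))\mid\mathcal{F}_s\,]\big|_{\alpha=0} \\
&= \partial_\alpha^n\,G_\alpha(s,V(s))\big|_{\alpha=0} = m_n(s,V(s)),
\end{align*}
which is the desired relation; integrability of $m_n(t,V(t))$ follows from the exponential-moment bound recorded next.

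The \emph{main obstacle} is justifying this interchange, which requires a dominating bound for the $\alpha$-derivatives of $G_\alpha(t,V(t))$ that is integrable uniformly for $\alpha$ in a neighborhood of $0$. Here the discrete structure helps. Since RW is a compound Poisson process with jumps $\pm 1$, one has $|V(t)|\le N(t)$ for the underlying Poisson counting process $N(t)$, whence ${\rm E}[e^{c|V(t)|}]\le {\rm E}[e^{cN(t)}]=e^{t(e^c-1)}<\infty$ for every $c>0$. Moreover $e^{-t(\cosh\alpha-1)}\le 1$ and $e^{\alpha V(t)}\le e^{r|V(t)|}$ for $|\alpha|\le r$, so each $\partial_\alpha^k G_\alpha(t,V(t))$, being a polynomial in $V(t)$ and in bounded hyperbolic functions of $\alpha$ multiplied by $G_\alpha(t,V(t))$, is dominated on $|\alpha|\le r$ by a polynomial in $|V(t)|$ times $e^{r|V(t)|}$, an integrable random variable. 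Dominated convergence then legitimizes differentiation under the conditional expectation to all orders, and the rest of the argument is the bookkeeping above.
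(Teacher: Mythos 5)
Your proposal is correct and follows essentially the same route as the paper, which simply asserts that (\ref{coef}) holds ``by straightforward calculation'' and that the martingale property follows from $G_\alpha(t,V(t))$ being an $\mathcal{F}_t$-martingale for every $\alpha$. You have merely filled in the details the paper leaves implicit: the Cauchy-product identity $c_n^{(j)}(t)=(n!/j!)\,b_{n-j}(t)$ organizes the coefficient check cleanly, and the exponential-moment bound ${\rm E}[e^{c|V(t)|}]\le e^{t(e^c-1)}$ correctly justifies differentiating the martingale identity under the conditional expectation.
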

\begin{proof}
By straightforward calculation we can check (\ref{coef}). We can prove that $\{ m_n(t, V(t)) \}_{n \in \N_0}$ are $\mathcal{F}_t$-martingales from the fact that $G_{\alpha}(t, x)$ is an $\mathcal{F}_t$-martingale for all $\alpha \in \mathbb{R}$. 
\end{proof}

\section{Harmonic transform and noncolliding system} 

Suppose $N \in \mathbb{N}$. We consider an $N$-dimensional RW on $\mathbb{Z}^N$, $\bd{V}(t) = (V_1(t), \ldots, V_N(t))$, $t \in [0, \infty)$, where $V_j(\cdot)$, $1 \leq j \leq N$ are independent copies of $V(\cdot)$. 
We take the initial point $\bd{u} = (u_1, \ldots, u_N) = \bd{V}(0) \in \mathbb{Z}^N$. 
The probability space is denoted by $(\Omega, \mathcal{F}, {\rm{P}}_{\bd{u}})$. 
The expectation is written as ${\rm{E}}_{\bd{u}}$.  
Let 
\begin{equation*}
\mathbb{W}_N = \{ \bd{x} = (x_1, \ldots, x_N) \in \mathbb{R}^N : x_1 < \cdots < x_N \}, 
\end{equation*}
which is the Weyl chamber of type ${\rm{A}}_{N-1}$. Define $\tau_{\bd{u}}$ be the exit time from the Weyl chamber of the RW started at $\bd{u} \in \mathbb{Z}^N \cap \mathbb{W}_N$, 
\begin{equation*}
\label{1.2} \tau_{\bd{u}} = \inf \{ t \geq 0 : \bd{V}(t) \notin \mathbb{W}_N \}.
\end{equation*}
In the present paper, we study the RW conditioned to stay in $\mathbb{W}_N$ forever. That is, $\tau_{\bd{u}} = \infty$ is conditioned. We call such a conditional RW the continuous-time (simple and symmetric) noncolliding RW. 

Let $\mathfrak{M}$ be the space of nonnegative integer-valued Radon measures on $\mathbb{Z}$. We consider the noncolliding RW as a process in $\mathfrak{M}$ and represent it by 
\begin{equation}
\label{1.3} \Xi (t, \cdot) = \sum_{j=1}^N \delta_{X_j(t)}(\cdot), \quad t \in [0, \infty),
\end{equation}
where 
\begin{equation}
\label{1.4} \bd{X}(t) = (X_1(t), \ldots, X_N(t)) \in \mathbb{Z}^N \cap \mathbb{W}_N, \quad t \in [0, \infty).
\end{equation}
The configuration $\Xi(t, \cdot) \in \mathfrak{M}, t \in [0, \infty)$ is unlabeled, while $\bd{X}(t) \in \mathbb{Z}^N \cap \mathbb{W}_N, t \in [0, \infty)$ is labeled. We write the probability measure for $\Xi(t, \cdot), t \in [0, \infty)$ started at $\xi \in \mathfrak{M}$ as $\Pxi$ with expectation $\Exi$, and introduce a filtration $\{ \mathcal{F}(t) : t \in [0, \infty) \}$ defined by $\mathcal{F}(t) = \sigma(\Xi(s) : 0 \leq s \leq t)$. We set $\mathfrak{M}_0 = \{ \xi \in \mathfrak{M}; \xi(\{ x \}) \leq 1$ for any $x \in \mathbb{Z} \}$. 

We write the Vandermonde determinant as 
\begin{equation}
\label{Vdet} h(\bd{x}) = \det_{1 \leq j, k \leq N} [x_j^{k-1}] = \prod_{1 \leq j < k \leq N} (x_k-x_j).  
\end{equation}
We would like to introduce the $h$-transform in the sense of Doob for RW in $\mathbb{W}_N$. For this purpose Corollary 2.2 and Theorem 2.4 in \cite{KOR} proved by K\"{o}nig, O'Connell, and Roch are useful. See also \cite{Koe05,EK08,K.non}. 
Here we rewrite their theorems with modifications to fit the present situation and put the following proposition. Let $\overline{\mathbb{W}_N}$ be $\{ \bd{x} = (x_1, \ldots, x_N) \in \mathbb{R}^N : x_1 \leq \cdots \leq x_N \}$ and $\partial \mathbb{W}_N := \overline{\mathbb{W}_N} \setminus \mathbb{W}_N$. 
\begin{prop}{\rm{(Corollary 2.2 and Theorem 2.4 in \cite{KOR})}} \\
The function $h$ given by (\ref{Vdet}) is harmonic for $\bd{V}(t)$. The restriction of $h$ to $\mathbb{W}_N$ is a strictly positive function. And $h$ vanishes at $\partial\mathbb{W}_N$. 
\end{prop}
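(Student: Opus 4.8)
The plan is to dispose of positivity and boundary vanishing at once from the product form in (\ref{Vdet}), and then to devote the real work to harmonicity. For $\bd{x}\in\mathbb{W}_N$ every factor $x_k-x_j$ with $j<k$ is strictly positive, so $h(\bd{x})=\prod_{1\le j<k\le N}(x_k-x_j)>0$; and for $\bd{x}\in\partial\mathbb{W}_N=\overline{\mathbb{W}_N}\setminus\mathbb{W}_N$ the chain $x_1\le\cdots\le x_N$ contains at least one equality $x_j=x_{j+1}$, whence the corresponding factor, and therefore $h(\bd{x})$, vanishes. These two assertions I would settle in a sentence each.

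Harmonicity means $\sum_{j=1}^{N}L_1^{(j)}h=0$, where $L_1^{(j)}$ is the one-site generator acting on the coordinate $x_j$, so that $L_1^{(j)}h(\bd{x})=\tfrac12\{h(\bd{x}+\bd{e}_j)+h(\bd{x}-\bd{e}_j)-2h(\bd{x})\}$. Here I would flag at the outset the point on which a naive proof founders: since $h$ has degree $N-1$ in each variable, the discrete Laplacian $L_1^{(j)}$ does not agree with the continuous one $\tfrac12\partial_{x_j}^2$ for $N\ge 5$, so the classical harmonicity $\sum_j\partial_{x_j}^2 h=0$ of the Vandermonde cannot be invoked directly. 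My main plan is instead to use the fundamental martingale polynomials $\{m_n(t,x)\}_{n\in\mathbb{N}_0}$ of (\ref{2.3})--(\ref{coef}), which satisfy $\partial_t m_n=-L_1 m_n$. Set $H(t,\bd{x})=\det_{1\le j,k\le N}[m_{k-1}(t,x_j)]$. The first step is to show that $H$ is independent of $t$ and equals $h$: by (\ref{2.4}) and (\ref{coef}) each $m_{k-1}(t,x)$ is monic of degree $k-1$, so the $k$-th column of $H$ differs from $(x_j^{k-1})_{j}$ only by a linear combination of columns of smaller index, and column reduction gives $H(t,\bd{x})=\det_{1\le j,k\le N}[x_j^{k-1}]=h(\bd{x})$ for every $t$; in particular $\partial_t H\equiv0$. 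The second step is to show $(\partial_t+\sum_{i}L_1^{(i)})H=0$: expanding the determinant as $\sum_{\pi}\sgn(\pi)\prod_j m_{\pi(j)-1}(t,x_j)$, within each product the coordinate $x_i$ occurs in a single factor, so $L_1^{(i)}$ acts only on $m_{\pi(i)-1}(t,x_i)$, and applying $\partial_t m_n+L_1 m_n=0$ termwise gives $(\partial_t+\sum_i L_1^{(i)})\prod_j m_{\pi(j)-1}(t,x_j)=0$. Combining the two steps, $\sum_i L_1^{(i)}h=\sum_i L_1^{(i)}H=-\partial_t H=0$.

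The hardest point is exactly the one just exploited: the discrete Laplacian obeys no Leibniz rule, so it is essential that in the separated product $\prod_j m_{\pi(j)-1}(t,x_j)$ each one-site operator $L_1^{(i)}$ sees only one factor, which is what lets the martingale relation be applied factor by factor with no cross terms. As an independent check I would record a purely algebraic argument for harmonicity: the symmetric operator $\sum_i L_1^{(i)}$ commutes with permutations of the coordinates while $h$ is antisymmetric, so $\sum_i L_1^{(i)}h$ is again antisymmetric; since each $L_1^{(i)}$ lowers the total degree by two, this polynomial has degree at most $\binom{N}{2}-2$, strictly below the minimal degree $\binom{N}{2}$ of any nonzero antisymmetric polynomial in $N$ variables, and must therefore vanish identically. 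Either route completes the proof.
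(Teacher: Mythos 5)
Your proof is correct, but note that the paper does not actually prove this proposition: it is imported verbatim from K\"onig--O'Connell--Roch \cite{KOR} (their Corollary 2.2 and Theorem 2.4), so you have supplied an argument where the paper supplies only a citation. The positivity and boundary-vanishing parts are as trivial as you say. For harmonicity, your first route --- writing $h(\bd{x})=\det_{1\le j,k\le N}[m_{k-1}(t,x_j)]$ by column reduction and then killing $(\partial_t+\sum_i L_1^{(i)})$ termwise in the permutation expansion via $\partial_t m_n+L_1m_n=0$ --- is sound (the separation of variables in each product $\prod_j m_{\pi(j)-1}(t,x_j)$ is exactly what makes the one-site difference operators act factorwise, as you note), and it is in effect the same mechanism the paper itself deploys later in Section 4.2, where $h(\bd{V}(t))/h(\bd{u})=\det[\mathcal{M}_{\xi}^{u_k}(t,V_j(t))]$ is exhibited as a determinant of independent martingales; your argument just runs that machinery backwards to recover $\sum_i L_1^{(i)}h=0$. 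Your second route (the image $\sum_iL_1^{(i)}h$ is antisymmetric of total degree at most $\binom{N}{2}-2$, while every nonzero antisymmetric polynomial is divisible by $h$ and hence has degree at least $\binom{N}{2}$) is the cleaner, fully self-contained argument and is closest in spirit to the proof in \cite{KOR}. The only point worth flagging is that ``harmonic for $\bd{V}(t)$'' is used in the paper to mean that $h(\bd{V}(t))$ is an $\mathcal{F}_t$-martingale, which is what Lemma \ref{WNDoob} actually consumes; passing from $\sum_iL_1^{(i)}h=0$ to the martingale property needs a word about integrability, which is immediate here since $h$ is a polynomial and the compound Poisson coordinates have all exponential moments --- or you can bypass the generator entirely by observing that each term of the permutation expansion of $\det[m_{k-1}(t,V_j(t))]$ is a product of independent one-coordinate martingales.
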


By this proposition we can construct the noncolliding RW, $\Xi$, as an $h$-transform of an absorbing RW, $\bd{V}$, in $\mathbb{W}_N$.  

\begin{lemma} \label{WNDoob}
Suppose that $N \in \mathbb{N}$ and $\xi=\sum_{j=1}^N \delta_{u_j}$ with $\bd{u} = (u_1, \ldots, u_N) \in \mathbb{Z}^N \cap \mathbb{W}_N$. Let $t \in [0, \infty)$, $t \leq T < \infty$. For any $\mathcal{F}(t)$-measurable bounded function $F$ we have
\begin{equation}
\label{h-trans} \Exi \left[ F(\Xi(\cdot)) \right] = \Eu \left[ F \left( \sum_{j=1}^N \delta_{V_j(\cdot)} \right) \bd{1}(\tauu > T) \frac{h(\bd{V}(T))}{h(\bd{u})} \right]. 
\end{equation}
\end{lemma}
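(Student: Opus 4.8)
The plan is to realize $\Xi$ as a Doob $h$-transform of the walk $\bd{V}$ absorbed on $\partial\mathbb{W}_N$, and then to read off (\ref{h-trans}) from the martingale property of the associated Radon--Nikodym density. First I would use the above Proposition to manufacture the relevant martingale. Since $h$ from (\ref{Vdet}) is harmonic for $\bd{V}$ (so the generator of $\bd{V}$ annihilates $h$ on all of $\mathbb{Z}^N$) and $h$ is a polynomial while $\bd{V}$ is compound Poisson with unit jumps, all moments of $\bd{V}(t)$ are finite and $h(\bd{V}(t))$ is a genuine $\mathcal{F}_t$-martingale; by optional stopping the stopped process $M(s) := h(\bd{V}(s\wedge\tauu))/h(\bd{u})$ is then a nonnegative martingale with $M(0)=1$. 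The key structural remark is that, because the coordinates move by $\pm 1$, the walk can leave $\mathbb{W}_N$ only by landing on a configuration with two equal coordinates, i.e. $\bd{V}(\tauu)\in\partial\mathbb{W}_N$, where $h$ vanishes. Hence $h(\bd{V}(\tauu))=0$ and therefore
\[
M(s) = \frac{\1(\tauu > s)\,h(\bd{V}(s))}{h(\bd{u})}, \qquad s\in[0,\infty).
\]

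Next I would invoke the definition of the noncolliding RW as the $h$-transform of the absorbed walk. Identifying the labeled walk $\bd{V}$ killed at $\tauu$ with the unlabeled configuration $\sum_{j=1}^N\delta_{V_j(\cdot)}$ (the ordering $V_1<\cdots<V_N$ is preserved on $\{\tauu>s\}$, so no information is lost in passing to $\mM$), the $h$-transform is the law $\Pxi$ whose restriction to $\mathcal{F}(t)$ has density $M(t)$ with respect to $\rP_{\bd{u}}$. Concretely this says that for every bounded $\mathcal{F}(t)$-measurable $F$,
\[
\Exi\!\left[F(\Xi(\cdot))\right] = \Eu\!\left[F\Big(\textstyle\sum_{j=1}^N\delta_{V_j(\cdot)}\Big)\,M(t)\right].
\]
Here one must check, as in \cite{KOR}, that $h$-harmonicity forces $\{M(s)\}$ to have constant mean one, so that $\Pxi$ is a genuine probability measure and the finite-dimensional laws are consistent across times; this consistency is exactly what legitimizes conditioning on the null event $\{\tauu=\infty\}$.

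Finally, to place the factor $h(\bd{V}(T))/h(\bd{u})$ at the later time $T\ge t$ rather than at $t$, I would use the tower property together with the martingale property of $M$: since $F$ is $\mathcal{F}(t)$-measurable and $\Eu[M(T)\mid\mathcal{F}(t)]=M(t)$,
\[
\Eu\!\left[F\,M(T)\right] = \Eu\!\left[F\,\Eu[M(T)\mid\mathcal{F}(t)]\right] = \Eu\!\left[F\,M(t)\right] = \Exi\!\left[F(\Xi(\cdot))\right],
\]
and substituting the explicit form of $M(T)$ yields precisely (\ref{h-trans}). The hard part will not be this last computation but rather the two justifications feeding into it: (i) that $h(\bd{V}(t))$ is a \emph{true} martingale, requiring the uniform integrability of the unbounded polynomial $h$ along the walk on $[0,T]$ so that optional stopping applies; and (ii) the rigorous construction of $\Pxi$ from the absorbed process via the density $M$, including the consistency that makes the value of $T$ irrelevant. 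Both rest on the harmonicity and boundary-vanishing of $h$ furnished by the Proposition and on the unit-jump structure of the RW.
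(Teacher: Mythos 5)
Your proposal is correct and follows the same route the paper takes: the paper states this lemma without any written proof, presenting it as an immediate consequence of the Doob $h$-transform construction based on Corollary 2.2 and Theorem 2.4 of \cite{KOR} (harmonicity of $h$, positivity on $\mathbb{W}_N$, vanishing on $\partial\mathbb{W}_N$). Your write-up simply supplies the details the paper leaves implicit — that the unit-jump structure forces the exit to occur on $\partial\mathbb{W}_N$ so the stopped martingale equals $\1(\tauu>s)h(\bd{V}(s))/h(\bd{u})$, and that the martingale/tower property lets one evaluate the density at any $T\ge t$ — all of which is sound.
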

 
\section{Transformation $\cS$ and Determinantal Martingale Representations}

\subsection{Definition of $\cS$} 

We introduce a transformation,
\begin{equation}
\label{cSdef} \cS\left[ \left. f(W) \right| (t, x) \right] := e^t\sum_{w \in \mathbb{Z}}I_{|w-x|}(-t)f(w), \quad t \in [0, \infty), x \in \mathbb{Z},  
\end{equation}
for $f : \mathbb{Z} \to \mathbb{C}$. 
By the definition, $\cS$ is a linear operator. Note that $W$ in the LHS is a dummy variable, but it will be useful to specify a function $f$ as shown below. 
\begin{lemma} \label{lem.cSrep}
The transformation $\cS$ is related with the characteristic function of RW, (\ref{phiV}) with (\ref{sigmacha}) by
\begin{equation}
\label{cSrep} \cS\left[ \left. e^{\alpha(W-x)} \right| (t, x) \right] = \frac{1}{\psi_{V(t)}(i\alpha)}, \quad \alpha \in \R. 
\end{equation}
\end{lemma}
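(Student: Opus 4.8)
The plan is to evaluate the left-hand side directly from the definition (\ref{cSdef}) and recognize it as a bilateral generating series of modified Bessel functions, then match it against the explicit value of the right-hand side. First I would record the target: since $\hat{\sigma}(z)=\cos z$ by (\ref{sigmacha}), formula (\ref{phiV}) gives $\psi_{V(t)}(i\alpha)=\exp\{t(\cos(i\alpha)-1)\}=\exp\{t(\cosh\alpha-1)\}$, so that
\[
\frac{1}{\psi_{V(t)}(i\alpha)}=e^{t(1-\cosh\alpha)}.
\]

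Next I would expand the left-hand side. Substituting $f(w)=e^{\alpha(w-x)}$ into (\ref{cSdef}) and changing the summation variable to $n=w-x\in\Z$ gives
\[
\cS\!\left[\left.e^{\alpha(W-x)}\right|(t,x)\right]=e^{t}\sum_{n\in\Z}I_{|n|}(-t)\,e^{\alpha n}.
\]
Because the order is an integer, $I_{-n}=I_n$, so $I_{|n|}(-t)=I_n(-t)$ for every $n\in\Z$, and the sum becomes the bilateral series $\sum_{n\in\Z}I_n(-t)e^{\alpha n}$. I would then invoke the generating-function identity $\sum_{n\in\Z}I_n(z)u^{n}=\exp\{\tfrac{z}{2}(u+u^{-1})\}$ with $z=-t$ and $u=e^{\alpha}$; since $\tfrac12(e^\alpha+e^{-\alpha})=\cosh\alpha$, this yields $\sum_{n\in\Z}I_n(-t)e^{\alpha n}=e^{-t\cosh\alpha}$. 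Multiplying by the prefactor $e^{t}$ produces $e^{t(1-\cosh\alpha)}$, which matches the right-hand side computed above, completing the identification.

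The only point requiring care — and the step I would treat as the main obstacle — is justifying the generating-function identity together with the interchange of summation it presupposes. I would establish it from the series definition (\ref{Idef}): writing $e^{-t\cosh\alpha}=e^{-(t/2)e^\alpha}e^{-(t/2)e^{-\alpha}}$, expanding both exponentials, and collecting powers of $e^\alpha$ reproduces the coefficients $I_n(-t)$ term by term. The rearrangement is legitimate because the bilateral series converges absolutely for every fixed $t\in[0,\infty)$ and $\alpha\in\R$: from (\ref{Idef}) one has $I_n(t)\sim (t/2)^n/n!$ as $n\to\infty$, a super-exponential decay that dominates the factor $e^{\alpha n}$ in both directions of the index. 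Once this convergence and the generating-function identity are in place, the remaining manipulations are the routine substitutions indicated above.
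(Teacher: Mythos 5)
Your proposal is correct and follows essentially the same route as the paper: the paper's proof is precisely the inline derivation of the bilateral generating-function identity $\sum_{n\in\Z}I_n(-t)e^{\alpha n}=e^{-t\cosh\alpha}$ from the series definition (\ref{Idef}), carried out by splitting the sum over $n\le 0$ and $n\ge 1$, rearranging the double series, and factoring it into $\exp(-\tfrac{t}{2}e^{\alpha})\exp(-\tfrac{t}{2}e^{-\alpha})$. Your version merely packages that computation as a citation of the standard generating function plus a justification of absolute convergence, which is a legitimate presentation of the same argument.
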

\begin{proof} 
By the definition (\ref{cSdef}),  
\begin{align}
\text{LHS of (\ref{cSrep})} &= \sum_{w \in \mathbb{Z}}e^tI_{|w-x|}(-t)e^{\alpha(w-x)} 
= \sum_{k \in \mathbb{Z}}e^tI_{|k|}(-t)e^{k\alpha} \notag \\
&= e^t \left\{ \sum_{s=0}^{\infty} e^{-s\alpha}I_s(-t) + \sum_{k=1}^{\infty} e^{k\alpha}I_k(-t) \right\}. \label{cSrep.1}
\end{align}
By the definition (\ref{Idef}) of modified Bessel function, (\ref{cSrep.1}) is equal to 
\begin{align*}
&e^t \left\{ \sum_{s=0}^{\infty} e^{-s\alpha}\sum_{\ell=0}^{\infty} \dfrac{1}{\ell!(\ell+s)!}\left( -\frac{t}{2} \right)^{2\ell+s} + \sum_{k=1}^{\infty} e^{k\alpha} \sum_{\ell=0}^{\infty} \frac{1}{\ell!(k+\ell)!}\left( -\frac{t}{2} \right)^{2\ell+k} \right\} \\
&= e^t \left\{ \sum_{k=-\infty}^0 \sum_{\ell=0}^{\infty} \frac{1}{\ell!(\ell-k)!} \left( -\frac{t}{2} \right)^{2\ell-k}e^{k\alpha} + \sum_{k=1}^{\infty} \sum_{\ell=k}^{\infty} \frac{1}{\ell!(\ell-k)!} \left( -\frac{t}{2} \right)^{2\ell-k} e^{k\alpha} \right\} \\
&= e^t \sum_{\ell=0}^{\infty} \sum_{m=0}^{\infty} \frac{1}{\ell!m!}\left( -\frac{t}{2} \right)^{\ell+m} e^{(\ell-m)\alpha} = e^t \sum_{\ell=0}^{\infty} \frac{1}{\ell!} \left( -\frac{t}{2} \right)^{\ell}e^{\ell\alpha} \sum_{m=0}^{\infty} \frac{1}{m!}\left( -\frac{t}{2} \right)^m e^{-m\alpha} \\
&= e^t \exp\left( -\frac{t}{2}e^{\alpha} \right)\exp\left( -\frac{t}{2}e^{-\alpha} \right) = \exp[-t(\cosh\alpha-1)] = \text{RHS of (\ref{cSrep})}.
\end{align*}
Then the proof is completed. 
\end{proof}

\subsection{Representations of martingales using $\cS$} 

\begin{lemma} \label{lem.2.1}
With the transformation (\ref{cSdef}), the fundamental martingale polynomials for RW, 
$\{ m_n(t, x) \}_{n \in \N_0}$, $t \in [0, \infty)$, have the following representations, 
\begin{equation*} \label{2.19b}
m_n(t, x) = \cS\left[ \left. W^n \right| (t, x) \right], \quad n \in \mathbb{N}_0, \quad t \in [0, \infty), \  x \in \mathbb{R}. 
\end{equation*}
\end{lemma}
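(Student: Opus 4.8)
The plan is to leverage the generating-function definition \eqref{2.3} of the polynomials $m_n$ together with the evaluation of $\cS$ on exponentials from Lemma \ref{lem.cSrep}. The crux is to show that $\cS$ applied to the single exponential $W \mapsto e^{\alpha W}$ reproduces the martingale generating function $G_{\alpha}(t,x)$ of \eqref{2.2}; expanding that identity in powers of $\alpha$ then identifies $\cS[W^n|(t,x)]$ with $m_n(t,x)$ coefficient by coefficient.

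First I would rewrite $G_{\alpha}$ in a form adapted to Lemma \ref{lem.cSrep}. By \eqref{phiV} we have $\psi_{V(t)}(i\alpha) = \exp\{t(\cosh\alpha-1)\}$, so \eqref{2.2} reads $G_{\alpha}(t,x) = e^{\alpha x}\exp\{-t(\cosh\alpha-1)\} = e^{\alpha x}/\psi_{V(t)}(i\alpha)$. On the $\cS$ side, the function transformed in \eqref{cSrep} is $w \mapsto e^{\alpha(w-x)} = e^{-\alpha x}e^{\alpha w}$; since $e^{-\alpha x}$ does not depend on the summation variable $w$ in \eqref{cSdef}, linearity of $\cS$ lets me pull it out, giving $\cS[e^{\alpha(W-x)}|(t,x)] = e^{-\alpha x}\,\cS[e^{\alpha W}|(t,x)]$. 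Combining this with \eqref{cSrep} produces the key identity
\begin{equation*}
\cS[e^{\alpha W}|(t,x)] = e^{\alpha x}\,\cS[e^{\alpha(W-x)}|(t,x)] = \frac{e^{\alpha x}}{\psi_{V(t)}(i\alpha)} = G_{\alpha}(t,x), \quad \alpha \in \R.
\end{equation*}

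Next I would substitute the Taylor series $e^{\alpha W} = \sum_{n=0}^{\infty} W^n \alpha^n/n!$ into the left-hand side and apply $\cS$ term by term, obtaining $\cS[e^{\alpha W}|(t,x)] = \sum_{n=0}^{\infty} \cS[W^n|(t,x)]\,\alpha^n/n!$. Matching this against the expansion $G_{\alpha}(t,x) = \sum_{n=0}^{\infty} m_n(t,x)\,\alpha^n/n!$ of \eqref{2.3} and equating the coefficients of $\alpha^n/n!$ yields $m_n(t,x) = \cS[W^n|(t,x)]$ for all $n \in \N_0$. For $x \in \Z$ the substitution $k = w-x$ shows that $\cS[W^n|(t,x)]$ is a polynomial in $x$ of degree $n$, so the identity, proved on the integers, extends to all $x \in \R$ by agreement of two polynomials at infinitely many points.

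The only delicate point — and the main obstacle — is the term-by-term passage of $\cS$ through the power series, because $\cS$ is itself the infinite sum over $w \in \Z$ in \eqref{cSdef}. Justifying it requires absolute convergence of the double series, i.e. finiteness of $e^t\sum_{w\in\Z}|I_{|w-x|}(-t)|\sum_{n=0}^{\infty}|\alpha w|^n/n! = e^t\sum_{w\in\Z}|I_{|w-x|}(-t)|e^{|\alpha w|}$. This holds for every $\alpha \in \R$ since, by \eqref{Idef}, $I_{|w-x|}(-t)$ decays like $(t/2)^{|w-x|}/|w-x|!$ as $|w|\to\infty$, which overwhelms the exponential $e^{|\alpha w|}$; the same summability is already built into Lemma \ref{lem.cSrep}. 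With Fubini's theorem in hand, both the displayed identity and the coefficient comparison are rigorous.
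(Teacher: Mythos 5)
Your proof is correct and follows essentially the same route as the paper: establish $\cS[e^{\alpha W}\,|\,(t,x)] = e^{\alpha x}/\psi_{V(t)}(i\alpha) = G_{\alpha}(t,x)$ via Lemma \ref{lem.cSrep}, then expand in $\alpha$ and match coefficients against \eqref{2.3}. The only difference is that you spell out the absolute-convergence/Fubini justification for the term-by-term expansion, which the paper leaves implicit.
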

\begin{proof}
By Lemma \ref{lem.cSrep}, we can see  
\begin{equation*}
G_{\alpha}(t, x) = \frac{e^{\alpha x}}{\psi_{V(t)}(i\alpha)} = \cS\left[ \left. e^{\alpha W} \right| (t, x) \right], \quad \alpha \in \R, 
\end{equation*}
for $G_\alpha(t, x)$ given by (\ref{2.3}) in Section 2.2. 
We expand the equality with respect to $\alpha$ around $\alpha=0$, and we get the lemma. 
\end{proof}

A direct consequence of Lemma \ref{lem.2.1} is the following. 

\begin{lemma} \label{lem.2.2}
Assume that $f$ is polynomial. Then $\cS\left[ \left. f(W) \right| (t, V(t)) \right]$ is an $\mathcal{F}_t$-martingale. 
\end{lemma}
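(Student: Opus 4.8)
The plan is to reduce the claim to the martingale property of the fundamental martingale polynomials already established in Section~2.2, using only the linearity of $\cS$ together with Lemma~\ref{lem.2.1}. Since $f$ is a polynomial, I would first expand it into a finite linear combination of monomials, writing $f(W)=\sum_{n=0}^{d} a_n W^n$ with $d=\deg f$ and coefficients $a_n \in \C$.

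Next I would invoke the linearity of $\cS$, noted right after its definition~(\ref{cSdef}), to pull the sum out:
\begin{equation*}
\cS\left[\left. f(W) \right|(t,x)\right]=\sum_{n=0}^{d} a_n\,\cS\left[\left. W^n \right|(t,x)\right], \quad t\in[0,\infty),\ x\in\mathbb{Z}.
\end{equation*}
By Lemma~\ref{lem.2.1} each term on the right equals $a_n m_n(t,x)$, so that $\cS[f(W)|(t,x)]=\sum_{n=0}^{d} a_n m_n(t,x)$ is a polynomial in $x$.

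Evaluating at $x=V(t)$, the process $\cS[f(W)|(t,V(t))]=\sum_{n=0}^{d} a_n m_n(t,V(t))$ is a fixed finite linear combination of the processes $\{m_n(t,V(t))\}_{n\in\N_0}$, each of which is an $\mathcal{F}_t$-martingale by the final Lemma of Section~2.2. Since a finite linear combination of $\mathcal{F}_t$-martingales is again an $\mathcal{F}_t$-martingale, the claim follows: integrability of the sum is immediate because each summand is integrable, and for $s\le t$ the identity ${\rm{E}}[\cS[f(W)|(t,V(t))]\mid\mathcal{F}_s]=\cS[f(W)|(s,V(s))]$ is obtained termwise from the linearity of conditional expectation.

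Because the argument merely assembles facts proved earlier, I do not expect a genuine obstacle. The only point deserving care is that the reduction relies essentially on $f$ being a polynomial, so that the sum is finite and no questions of convergence or integrability arise; for a general $f$ one would instead have to justify interchanging $\cS$ with an infinite series and separately control the integrability of $\cS[f(W)|(t,V(t))]$, which is precisely what the polynomial hypothesis lets us sidestep.
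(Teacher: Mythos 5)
Your argument is correct and is exactly the route the paper intends: it states this lemma as ``a direct consequence of Lemma~\ref{lem.2.1}'' with no written proof, the implicit reasoning being precisely your expansion of $f$ into finitely many monomials, linearity of $\cS$, and the martingale property of the $m_n(t,V(t))$. Your added remarks on integrability and the role of the polynomial hypothesis are sound but go slightly beyond what the paper records.
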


The transformation (\ref{cSdef}) is extended to the linear transformation of functions of $\bd{x} \in \mathbb{Z}^N$ so that, if $F^{(k)}(\bd{x})=\prod_{j=1}^N f_j^{(k)}(x_j)$, $k=1, 2$, then 
\begin{equation*}
\cS[ \left. F^{(k)}(\bd{W}) \right| \{ (t_{\ell}, x_{\ell}) \}_{\ell=1}^N ] = \prod_{j=1}^N \cS \left[  \left. f_j^{(k)}(W_j) \right| (t_j, x_j) \right], \quad k=1, 2, 
\end{equation*}
and 
\begin{align*}
&\cS \left[ \left. c_1F^{(1)}(\bd{W})+c_2F^{(2)}(\bd{W}) \right| \{(t_{\ell}, x_{\ell})\}_{\ell=1}^N \right] \\
&\quad = c_1\cS\left[ \left. F^{(1)}(\bd{W}) \right| \{(t_{\ell}, x_{\ell})\}_{\ell=1}^N \right] + c_2\cS\left[ \left. F^{(2)}(\bd{W}) \right| \{(t_{\ell}, x_{\ell})\}_{\ell=1}^N \right], 
\end{align*}
$c_1, c_2 \in \mathbb{C}$, for $0 < t_j < \infty$, $1 \leq j \leq N$, where $\bd{W}=(W_1, \ldots, W_N) \in \mathbb{Z}^N$. In particular, if $t_{\ell} = t$, $1 \leq \forall \ell \leq N$, we write $\cS[ \cdot | \{(t_{\ell}, x_{\ell})\}_{\ell=1}^N ]$ simply as $\cS[\cdot | (t, \bd{x})]$ with $\bd{x}=(x_1, \ldots, x_N)$. 
By multiliniearity of determinant, the Vandermonde determinant does not change in replacing $x_i^{k-1}$ by any monic polynomial of $x_j$ of degree $k-1$, $1 \leq j, k \leq N$. Since $m_{k-1}(t, x_j)$ is a monic polynomial of $x_j$ of degree $k-1$,  
\begin{align*}
\frac{h(\bd{V}(t))}{h(\bd{u})} &= \frac{1}{h(\bd{u})} \det_{1 \leq j, k \leq N}[m_{k-1}(t, V_j(t))] \\
&= \frac{1}{h(\bd{u})} \det_{1 \leq j, k \leq N}[ \cS[ \left. W_j^{k-1} \right| (t, V_j(t))] ] \\
&= \cS \left[ \left. \frac{1}{h(\bd{u})} \det_{1 \leq j, k \leq N}[W_j^{k-1}] \right| (t, \bd{V}(t)) \right], 
\end{align*}
where we have used the multilinearity of determinant. Therefore, we have obtained the equality, 
\begin{equation} \label{2.18}
\frac{h(\bd{V}(t))}{h(\bd{u})} = \cS \left[ \left. \frac{h(\bd{W})}{h(\bd{u})} \right| (t, \bd{V}(t)) \right] , \quad t \in [0, \infty). 
\end{equation}

We set $\xi = \sum_{j=1}^N \delta_{u_j} \in \mathfrak{M}_0$ and consider a set of functions of $z \in \mathbb{C}$, 
\begin{equation}
\label{1.10} \Phi_{\xi}^{u_k}(z) = \prod_{\substack{1 \leq j \leq N, \\ j \neq k}} \frac{z-u_j}{u_k-u_j}, \quad 1 \leq k \leq N. 
\end{equation}
For each $1 \leq k \leq N$, the function $\Phi_{\xi}^{u_k}(z)$ is a polynomial of $z$ with degree $N-1$ with zeros at $u_j$, $1 \leq j \leq N$, $j \neq k$ and $\Phixi{u_k}(u_k) = 1$. By lemma \ref{lem.2.2} we can prove that, for each $1 \leq k \leq N$, 
\begin{equation}
\label{1.11} \Mxi{u_k}(t, V_j(t)) := \cS\left[ \left. \Phixi{u_k}(W_j) \right| (t, V_j(t))\right], \quad t \in [0, \infty), \quad 1 \leq j \leq N
\end{equation}
provide independent $\mathcal{F}_t$-martingales. Then we see that for $0 \leq t < \infty$, 
\begin{align}
\rE_{\bd{u}}[\Mxi{u_k}(t, V_j(t))] &= \rE_{\bd{u}}[\Mxi{u_k}(0, V_j(0))] \notag \\
&= \Mxi{u_k}(0, u_j) \notag \\
&= \Phixi{u_k}(u_j) = \delta_{j,k}, \quad 1 \leq j, k \leq N. \label{Mlim0}
\end{align}
Now we consider the determinant identity \cite{KT13}, 
\begin{equation} \label{2.19}
\frac{h(\bd{z})}{h(\bd{u})} = \det_{1 \leq j ,k \leq N} \left[ \Phi_{\xi}^{u_k}(z_j) \right], 
\end{equation}
where $\xi = \sum_{j=1}^N \delta_{u_j}$, $\bd{u} = (u_1, \ldots, u_N) \in \mathbb{W}_N$, $\bd{z} = (z_1, \ldots, z_N) \in \mathbb{C}^N$ and $\Phi_{\xi}^{u_k}(z)$ is given by (\ref{1.10}).  
Using this identity for $h(\bd{W})/h(\bd{u})$ in (\ref{2.18}), we have 
\begin{align}
\frac{h(\bd{V}(t))}{h(\bd{u})} &= \cS \left[ \left. \det_{1 \leq j, k \leq N}[\Phixi{u_k}(W_j)] \right| (t, \bd{V}(t)) \right] \notag \\
&= \det_{1 \leq j, k \leq N} \left[ \cS[\left. \Phixi{u_k}(W_j) \right| (t, V_j(t))] \right] \notag \\
&= \det_{1 \leq j, k \leq N} \left[ \Mxi{u_k}(t, V_j(t)) \right], \quad t \in [0, \infty).  \label{VancS}
\end{align}

\subsection{Determinantal martingales representation} 

Since we consider the noncolliding RW as a process represented by an unlabeled configuration (\ref{1.3}), measurable functions of $\Xi(\cdot)$ are only symmetric functions of $N$ variables, $X_j(\cdot)$, $1 \leq j \leq N$. Then, we obtain the following representation. Following \cite{K.dif}, we call it the determinantal martingale representation (DMR) for the present noncolliding RW. 

\begin{prop} \label{prop.3.1}
Suppose that $N \in \mathbb{N}$ and $\xi=\sum_{j=1}^N \delta_{u_j}$ with $\bd{u} = (u_1, \ldots, u_N) \in \mathbb{Z}^N \cap \mathbb{W}_N$. Let $t \in [0, \infty)$, $t \leq T \in [0, \infty)$. For any $\mathcal{F}(t)$-measurable bounded function $F$ we have
\begin{equation}
\label{DMRXi} \mathbb{E}_{\xi}[F(\Xi(\cdot))] = \rE_{\bd{u}}\left[ F\left( \sum_{j=1}^N\delta_{V_j(\cdot)} \right) \det_{1 \leq j, k \leq N} \left[ \Mxi{u_k}(T, V_j(T)) \right] \right]. 
\end{equation}
That is the present process $(\Xi, \P_{\xi})$ has DMR associated with $(V, \mathcal{M}_{\xi})$, where $\mathcal{M}_{\xi}$ is defined by (\ref{1.11}). 
\end{prop}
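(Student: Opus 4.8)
The plan is to start from the Doob $h$-transform of Lemma~\ref{WNDoob} and the determinantal identity (\ref{VancS}), and then to strip off the survival indicator $\bd{1}(\tauu > T)$ by a reflection (color-swapping) argument at the exit time. Writing $D(T) := \det_{1 \le j, k \le N}[\Mxi{u_k}(T, V_j(T))]$, equation (\ref{VancS}) gives $h(\bd{V}(T))/h(\bd{u}) = D(T)$ for every $T \in [0,\infty)$, so Lemma~\ref{WNDoob} reads $\Exi[F(\Xi(\cdot))] = \Eu[\,F(\sum_{j=1}^N \delta_{V_j(\cdot)})\,\bd{1}(\tauu > T)\,D(T)\,]$. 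Since $\bd{1}(\tauu > T) = 1 - \bd{1}(\tauu \le T)$, the assertion (\ref{DMRXi}) is equivalent to the vanishing identity
\begin{equation*}
\Eu\left[ F\left( \sum_{j=1}^N \delta_{V_j(\cdot)} \right) \bd{1}(\tauu \le T)\, D(T) \right] = 0.
\end{equation*}

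The key step is a pathwise involution on the event $\{\tauu \le T\}$. Because $\bd{V}$ is a family of independent continuous-time walks, almost surely no two coordinates jump at the same instant; hence at the exit time $\tau := \tauu$ the walk reaches $\partial\mathbb{W}_N$ through a single adjacent pair, i.e.\ there is an a.s.\ well-defined index $\iota \in \{1, \ldots, N-1\}$, measurable with respect to the history up to $\tau$, with $V_\iota(\tau) = V_{\iota+1}(\tau)$. I define $\Theta$ to exchange the trajectories of particles $\iota$ and $\iota+1$ for all $s \ge \tau$, leaving all other coordinates and the whole interval $[0, \tau)$ untouched. By the strong Markov property together with the fact that $V_\iota$ and $V_{\iota+1}$ are independent copies of the same walk issued from the common site $V_\iota(\tau)=V_{\iota+1}(\tau)$, the map $\Theta$ preserves $\rP_{\bd{u}}$ restricted to $\{\tauu \le T\}$.

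I then track how each factor transforms under $\Theta$. The exit time $\tau$ depends only on the path on $[0, \tau]$, which is fixed, so $\bd{1}(\tauu \le T)$ is invariant. The unlabeled configuration $\Xi(s) = \sum_{j=1}^N \delta_{V_j(s)}$ is invariant for every $s$, since exchanging two labels does not change a multiset, and as $F$ is a function of $\Xi(\cdot)$ alone, $F(\Xi(\cdot))$ is invariant. Finally, because $T \ge \tau$, the map $\Theta$ swaps the values $V_\iota(T)$ and $V_{\iota+1}(T)$, hence interchanges rows $\iota$ and $\iota+1$ of the matrix $[\Mxi{u_k}(T, V_j(T))]$ and sends $D(T) \mapsto -D(T)$. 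Invariance of $\rP_{\bd{u}}$ under $\Theta$ then forces the displayed expectation to equal its own negative, so it vanishes; combined with the reduction above this yields (\ref{DMRXi}).

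The main obstacle I anticipate is the rigorous justification that $\Theta$ is measure-preserving and that the colliding index $\iota$ is genuinely well-defined: both rest on the continuous-time (Poissonized) structure, which guarantees that collisions occur by two particles landing on a common site one at a time, rather than by a simultaneous interchange, so that the single-pair swap is unambiguous. This is precisely where the continuous-time hypothesis is essential; in a naive discrete-time setting one would additionally have to control events where neighboring particles cross in a single step, and the clean sign cancellation would demand extra care.
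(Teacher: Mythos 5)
Your proposal is correct and follows essentially the same route as the paper: the Doob $h$-transform of Lemma~\ref{WNDoob} combined with the identity (\ref{VancS}), with the survival indicator stripped off by a label-swap (reflection) argument at the exit time that plays the symmetry of $F$ against the antisymmetry of the determinant. The paper carries out the swap on $h(\bd{V}(t_M))/h(\bd{u})$ for product-form $F$ and then moves $t_M$ to $T$ by the martingale property, but this differs only cosmetically from your direct argument at time $T$.
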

\begin{proof}
To prove (\ref{DMRXi}), it is sufficient to consider the case that $F$ is given as $F(\Xi(\cdot)) = \prod_{m=1}^M g_m(\bd{X}(t_m))$ for $M \in \mathbb{N}$, $t_1 < \cdots < t_M \leq T \in [0, \infty)$, with symmetric bounded measurable functions $g_m$ on $\mathbb{Z}^N$, $1 \leq m \leq M$. Here we prove the equalities
\begin{equation}
\mathbb{E}_{\xi} \left[ \prod_{m=1}^M g_m(\bd{X}(t_m)) \right] = \rE_{\bd{u}} \left[ \prod_{m=1}^M g_m(\bd{V}(t_m)) \det_{1 \leq j, k \leq N} \left[ \Mxi{u_k}(T, V_j(T)) \right] \right]. \label{3.2}
\end{equation}
By Lemma \ref{WNDoob}, the LHS of (\ref{3.2}) is given by 
\begin{equation} \label{3.3}
\rE_{\bd{u}}\left[ \prod_{m=1}^M g_m(\bd{V}(t_m)) {\bf{1}}(\tau_{\bd{u}} > t_M) \frac{h(\bd{V}(t_M))}{h(\bd{u})} \right], 
\end{equation}
where we used the fact that $h(\bd{V}(t))/h(\bd{u})$ is an $\mathcal{F}_t$-martingale. 
At time $t = \tau_{\bd{u}}$, there are at least one pair $(j, j+1)$ such that $V_j(\tau_{\bd{u}}) = V_{j+1}(\tau_{\bd{u}})$, $1 \leq j \leq N-1$. We choose the minimal $j$. Let $\sigma_{j, j+1}$ be the permutation of the indices $j$ and $j+1$ and for $\bd{v}=(v_1, \cdots, v_N) \in \mathbb{Z}^N$ we put $\sigma_{j, j+1}(\bd{v}) = (v_{\sigma_{j, j+1}(k)})_{k=1}^N = (v_1, \ldots, v_{j+1}, v_j, \ldots, v_N)$. Let $\bd{u'}$ be the labeled configuration of the process at time $t = \tau_{\bd{u}}$. Since $u_j'=u_{j+1}'$ by the above setting, under the probability law $\P_{\bd{u}'}$ the processes $\bd{V}(t)$, $t > \tau_{\bd{u}}$ and $\sigma_{j, j+1}(\bd{V}(t))$, $t > \tau_{\bd{u}}$ are identical in distribution. Since $g_m$, $1 \leq m \leq M$ are symmetric, but $h$ is antisymmetric, the Markov property of the process $\bd{V}(\cdot)$ gives 
\begin{equation*}
\rE_{\bd{u}}\left[ \prod_{m=1}^M g_m(\bd{V}(t_M)){\bf{1}}(\tau_{\bd{u}} \leq t_M)\frac{h(\bd{V}(t_M))}{h(\bd{u})} \right] = 0. 
\end{equation*}
Therefore, (\ref{3.3}) is equal to 
\begin{equation*}
\rE_{\bd{u}} \left[ \prod_{m=1}^M g_m(\bd{V}(t_m))\frac{h(\bd{V}(t_M))}{h(\bd{u})} \right] = \rE_{\bd{u}} \left[ \prod_{m=1}^M g_m(\bd{V}(t_m))\frac{h(\bd{V}(T))}{h(\bd{u})} \right], 
\end{equation*}
where the $\mathcal{F}_t$-martingale property of $h(\bd{V}(t))/h(\bd{u})$ was used. By (\ref{VancS}), (\ref{3.2}) is concluded. 

Next we check that $\mathcal{M}_{\xi}$ satisfies the conditions (M1), (M2) and (M3) for Definition 1.1 in \cite{K.dif}. 
Since $\Phi_{\xi}^{u_k}(z)$ is a polynomial of $z$ of degree $N-1$, $\mathcal{M}_{\xi}^{u_k}(t, V(t))$ is expressed by a linear combination of the polynomial martingales $\{ m_n(t, V(t)) \}_{n \in \N_0}$. Then $\mathcal{M}_{\xi}(t, V(t))$, $1 \leq k \leq N$ are $\mathcal{F}_t$-martingales. Then the condition (M1) is proved. Since we assume $\xi \in \mathfrak{M}_0$, then the set of zeros of $\Phi_{\xi}^{u_k}(z)$ is different from that of $\Phi_{\xi}^{u_l}(z)$ for $k \neq l$. Therefore the condition (M2) is proved. 
By (\ref{Mlim0}) we can check that the condition (M3) is satisfied. 
Then the proof is completed. 
\end{proof}

\section{Determinantal process} 

\subsection{Correlation kernel} 
For any integer $M \in \mathbb{N}$, a sequence of times $\bd{t}=(t_1, \ldots, t_M) \in [0, \infty)^M$ with $t_1 < \cdots < t_M \leq T \in [0, \infty)$, and a sequence of continuous functions $\bd{f} = (f_{t_1}, \ldots, f_{t_M})$, the moment generating function of multitime distribution of the process $\Xi(\cdot)$ is defined by 
\begin{equation} \label{3.4}
\Psixi := \mathbb{E}_{\xi} \left[ \exp \left\{ \sum_{m=1}^M \int_{\mathbb{Z}} f_{t_m}(x) \Xi(t_m, dx) \right\} \right]. 
\end{equation}
It is expand with respect to 
\begin{equation} \label{3.5}
\chi_{t_m}(\cdot) = e^{f_{t_m}(\cdot)}-1, \quad 1 \leq m \leq M
\end{equation}
as 
\begin{equation} \label{3.6}
\Psixi = \sum_{\substack{N_m \geq 0, \\ 1 \leq m \leq M}} \sum_{\substack{\bd{x}_{N_m}^{(m)} \in \mathbb{Z}^{N_m} \cap \mathbb{W}_{N_m}, \\ 1 \leq m \leq M}} \prod_{m=1}^M \prod_{j=1}^{N_m} \chi_{t_m}\left( x_j^{(m)} \right) \rho_{\xi} \left( t_1, \bd{x}_{N_1}^{(1)}; \ldots ; t_M, \bd{x}_{N_M}^{(M)} \right), 
\end{equation}
where $\bd{x}_{N_m}^{(m)}$ denotes $(x_1^{(1)}, \ldots, x_{N_m}^{(m)})$, and (\ref{3.6}) defines the spatio-temporal correlation functions $\rho_{\xi}(\cdot)$ for the process $(\Xi(t), t \in [0, \infty), \mathbb{P}_{\xi})$. 

Given an integral kernel 
${\bf{K}}(s, x; t, y); (s, x), (t, y) \in [0, \infty) \times \mathbb{Z}$, 
the Fredholm determinant is defined as 
\begin{align}
&\Det [\delta_{st}\delta_x(y) + {\bf{K}}(s,x; t,y)\chi_t(y)] \notag \\
&= \sum_{\substack{N_m \geq 0, \\ 1 \leq m \leq M}} \sum_{\substack{\bd{x}_{N_m}^{(m)} \in \mathbb{Z}^{N_m} \cap \mathbb{W}_{N_m}, \\ 1 \leq m \leq M}} \prod_{m=1}^M \prod_{j=1}^{N_m} \chi_{t_m}\left( x_j^{(m)} \right) \det_{\substack{1 \leq j \leq N_m, 1 \leq k \leq N_n, \\ 1 \leq m, n \leq M}} \left[ {\bf{K}}(t_m, x_j^{(m)}; t_n, x_k^{(n)}) \right]. \label{3.7}
\end{align}
\begin{defi}{\rm{(Definition 1.2 in \cite{K.dif})}}
If any moment generating function (\ref{3.4}) is given by a Fredholm determinant, the process $(\Xi, \P_{\xi})$ is said to be determinantal. In this case, all spatio-temporal correlation functions are given by determinants as 
\begin{equation} \label{3.10}
\rho_{\xi}(t_1, \bd{x}_{N_1}^{(1)}; \ldots ; t_M, \bd{x}_{N_M}^{(M)}) = \det_{\substack{1 \leq j \leq N_m, 1 \leq k \leq N_n, \\ 1 \leq m, n \leq M}} \left[ \mathbb{K}_{\xi}(t_m, x_j^{(m)}; t_n, x_k^{(n)}) \right], 
\end{equation}
$0 \leq t_1 < \cdots < t_M < \infty$, $1 \leq N_m \leq N$, $\bd{x}_{N_m}^{(m)} \in S^{N_m}$, $1 \leq m \leq M \in \mathbb{N}$. Here the integral kernel, $\mathbb{K}_{\xi} : ([0, \infty) \times S)^2 \to \mathbb{R}$, is a function of initial configuration $\xi$ and is called the correlation kernel. 
\end{defi}
The main theorem of the present paper is the following. 

\begin{theorem} \label{thm.3.3}
For any initial configuration $\xi \in \mathfrak{M}_0$ with $\xi(\mathbb{Z})=N \in \mathbb{N}$, the noncolliding RW, $(\Xi(t), t \in [0, \infty), \Pxi)$ is determinantal with the kernel given by (\ref{1.14}). 
\end{theorem}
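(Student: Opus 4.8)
The plan is to derive both the determinantal property and the explicit kernel (\ref{1.14}) from the determinantal martingale representation of Proposition \ref{prop.3.1}. Starting from the moment generating function (\ref{3.4}), I would write the integrand as $\prod_{m=1}^{M}\prod_{j=1}^{N}\bigl(1+\chi_{t_m}(X_j(t_m))\bigr)$ using (\ref{3.5}), and apply Proposition \ref{prop.3.1} to replace $\Xi$ by the independent walks $\bd{V}$ weighted by $\det_{1\le j,k\le N}[\Mxi{u_k}(T,V_j(T))]$. Since each factor $\prod_{m=1}^{M}(1+\chi_{t_m}(V_j(t_m)))$ depends only on the $j$-th trajectory, I absorb it into the $j$-th row of the determinant by multilinearity. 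The walks being independent, the permutation expansion of the determinant then factorizes term by term, giving
\begin{equation*}
\Psixi=\det_{1\le j,k\le N}\Bigl[\rE_{\bd{u}}\bigl[\textstyle\prod_{m=1}^{M}(1+\chi_{t_m}(V_j(t_m)))\,\Mxi{u_k}(T,V_j(T))\bigr]\Bigr].
\end{equation*}

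By the martingale property (M1) and (\ref{Mlim0}), the term obtained by replacing every $1+\chi_{t_m}$ with $1$ contributes $\rE_{\bd{u}}[\Mxi{u_k}(T,V_j(T))]=\Phixi{u_k}(u_j)=\delta_{jk}$, so the matrix is $I$ plus a correction built from the nonempty subsets of marked times. Expanding this finite determinant and regrouping the contributions of the marked times via the Markov property of $\bd{V}$, which inserts transition probabilities $p$ between consecutive marked times, converts it into the Fredholm determinant (\ref{3.7}); this is precisely the general passage from a DMR satisfying (M1)--(M3) to a determinantal process established in \cite{K.dif}. The resulting correlation kernel is
\begin{equation*}
\mathbb{K}_\xi(s,x;t,y)=\sum_{k=1}^{N}p(s,x|u_k)\,\Mxi{u_k}(t,y)-\1(s>t)\,p(s-t,x|y),
\end{equation*}
with $p$ the transition probability and $\Mxi{u_k}(t,y)=\cS[\Phixi{u_k}(W)|(t,y)]$.

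It remains to make this kernel explicit and match (\ref{1.14}). Using Lemma \ref{prob.dens}, $p(t,y|x)=e^{-t}I_{|y-x|}(t)$, and the definition (\ref{cSdef}), $\Mxi{u_k}(t,y)=e^{t}\sum_{w\in\Z}I_{|w-y|}(-t)\Phixi{u_k}(w)$. Splitting this sum into the support points $\{u_\ell\}_{\ell=1}^{N}$, where $\Phixi{u_k}(u_\ell)=\delta_{k\ell}$ isolates the single term $e^{t}I_{|y-u_k|}(-t)$, and the remaining sites $w\in\Z\setminus\{u_\ell\}$, the kernel becomes $e^{t-s}$ times the bracketed expression in (\ref{1.14}), while the last term becomes $-\1(s>t)e^{t-s}I_{|x-y|}(s-t)$. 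The common prefactor $e^{t-s}$ is removed by the gauge transformation $\mathbb{K}_\xi\mapsto (g(s,x)/g(t,y))\mathbb{K}_\xi$ with $g(t,x)=e^{t}$, that is, multiplication by $e^{s-t}$; since the diagonal gauge factors cancel between the rows and columns of every determinant in (\ref{3.10}), this leaves all correlation functions invariant and yields exactly (\ref{1.14}).

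The main obstacle is the second step: converting the finite $N\times N$ determinant obtained above into the Fredholm determinant (\ref{3.7}) with the stated kernel. This is the combinatorial core of the theory of \cite{K.dif}, requiring one to decompose the marked-time weights along trajectories via the Markov property and to use the biorthogonality relations (\ref{Mlim0}), conditions (M2)--(M3), to cancel the contributions that would otherwise spoil the determinantal form; the $-\1(s>t)p(s-t,x|y)$ term arises precisely as the self-contraction correction for time-ordered pairs. If one instead quotes this general theorem directly, the only remaining care is to justify interchanging the infinite sum over $w\in\Z$ with the expectation and the determinant expansion, which is immediate from the super-exponential decay of $I_{|w-y|}(-t)$ as $|w|\to\infty$.
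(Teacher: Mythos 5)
Your proposal is correct and follows essentially the same route as the paper: both rest on the determinantal martingale representation of Proposition \ref{prop.3.1} together with the general DMR-to-determinantal theorem (Theorem 1.3 of \cite{K.dif}) to obtain the kernel $\sum_{j}p(s,x|u_j)\Mxi{u_j}(t,y)-\1(s>t)p(s-t,x|y)$, and then make it explicit via Lemma \ref{prob.dens}, the evaluation $\Phixi{u_j}(u_\ell)=\delta_{j\ell}$ splitting the $w$-sum at the support points, and removal of the irrelevant gauge factor $e^{t-s}$. The extra detail you give on how the Fredholm-determinant structure emerges from the $N\times N$ determinant is a faithful sketch of the cited theorem's proof rather than a different argument.
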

\begin{proof}
By Theorem 1.3 in \cite{K.dif}, and Proposition \ref{prop.3.1} in the present paper, we can prove that $(\Xi, \mathbb{P}_\xi)$ is determinantal with the kernel 
\begin{equation}
\label{kernelp} {\bf{K}}(s, x; t, y) = \sum_{j=1}^N p(s,x|u_j)\Mxi{u_j}(t, y) - \bd{1}(s>t)p(s-t, x|y),
\end{equation}
where $p$ is the transition probability (\ref{1.1}) and $\mathcal{M}_{\xi}$ is defined by (\ref{1.11}). 
By Lemma \ref{prob.dens} and (\ref{cSdef}) with (\ref{1.10}),  
\begin{align}
&{\bf{K}}(s, x; t, y) \notag \\
&= \sum_{j=1}^N e^{-s}I_{|x-u_j|}(s)e^t\sum_{w \in \mathbb{Z}} I_{|w-y|}(-t)\Phi_{\xi}^{u_j}(w)-\1(s>t)e^{-(s-t)}I_{|x-y|}(s-t) \notag \\
&= e^{t-s}{\bigg\{} \sum_{j=1}^N \sum_{w \in \mathbb{Z}} I_{|x-u_j|}(s)I_{|w-y|}(-t)\prod_{\substack{1 \leq \ell \leq N, \\ \ell \neq j}} \frac{w-u_{\ell}}{u_j-u_{\ell}} - \1{(s>t)}I_{|x-y|}(s-t) {\bigg\}}. \label{bfK}
\end{align}
For $w \in \{ u_j \}_{j=1}^N$, 
\begin{equation}
\prod_{\substack{1 \leq \ell \leq N, \\ \ell \neq j}} \frac{w-u_{\ell}}{u_j-u_{\ell}} = 
\begin{cases} 1, & \text{if $w = u_j$}, \\ 0, & \text{if $w=u_{\ell}$, $\ell \neq j$}. \end{cases}
\label{1or0}
\end{equation}
We apply (\ref{1or0}) to (\ref{bfK}), and then we obtain 
\begin{align}
&{\bf{K}}(s, x; t, y) \notag \\
&= e^{t-s}{\bigg\{} \sum_{j=1}^N I_{|x-u_j|}(s)I_{|y-u_j|}(-t) \notag \\
&\quad + \sum_{j=1}^N \sum_{w \in \Z \setminus \{ u_k \}_{k=1}^N} I_{|x-u_j|}(s)I_{|y-w|}(-t)\prod_{\substack{1 \leq \ell \leq N, \\ \ell \neq j}} \frac{w-u_{\ell}}{u_j-u_{\ell}} -\1(s>t)I_{|x-y|}(s-t) {\bigg\}}. \notag
\end{align}
Since any factor of the form $f(t, y)/f(s, x)$ is irrelevant for correlation kernels, we obtain (\ref{1.14}). The proof is completed. 
\end{proof}

\begin{remark}
Johansson \cite{Joh01} considered the Poissonized Plancherel measure and proved that it is a DPP. The correlation kernel is given by
\begin{equation}
\label{Joh} {\rm K}(x, y) = \sum_{k=1}^{\infty} J_{x+k}(2\sqrt{\alpha})J_{y+k}(2\sqrt{\alpha}), \quad x, y \in \Z, 
\end{equation}
with a parameter $\alpha >0$ of Poisson distribution, where $J_{\nu}(z)$ is the Bessel function related with $I_{\nu}$ by \cite{Wat44}
\begin{equation*}
I_{\nu}(z) = \begin{cases}
e^{-\nu\pi i/2} J_{\nu}(iz), & -\pi < {\rm arg}(z) < \pi/2, \\
e^{3\nu\pi i/2} J_{\nu}(iz), & \pi/2 < {\rm arg}(z) < \pi. 
\end{cases}
\end{equation*}
When $x \neq y$, (\ref{Joh}) is written as 
\begin{equation*}
{\rm K}(x, y) = \sqrt{\alpha} \frac{J_x(2\sqrt{\alpha})J_{y+1}(2\sqrt{\alpha})-J_{x+1}(2\sqrt{\alpha})J_y(2\sqrt{\alpha})}{x-y}. 
\end{equation*}
It is called the discrete Bessel kernel. See also \cite{BOO00, Sos00}. We notice that if we set $u_j = -j$, $1 \leq j \leq N$, and $s=t>0$, the first term of (\ref{1.14}) seems to provide a finite-term approximation of (\ref{Joh}) with a negative parameter $\alpha = -t^2/4$, since we see 
\begin{equation*}
\sum_{j=1}^{N} I_{x+j}(t)I_{y+j}(-t) = i^{y-x}\sum_{j=1}^{N}J_{x+j}(it)J_{y+j}(it). 
\end{equation*}
\end{remark}

\subsection{Extension to infinite particle systems} 

For $a \in \{ 2, 3, \ldots \}$, we consider a configuration on $\mathbb{Z}$ having equidistant spacing $a$ with an infinite number of particles, 
\begin{equation}
\label{1.16} \xi_{a\mathbb{Z}}(\cdot) = \sum_{k \in \mathbb{Z}} \delta_{ak}(\cdot). 
\end{equation}
For the infinite-particle configuration (\ref{1.16}), a one-parameter family of linearly independent entire functions of $z \in \mathbb{C}$ with a parameter $k \in \mathbb{Z}$ is defined by 
\begin{align}
\hatPhi(z) &:= \prod_{j \in \mathbb{Z}, j \neq k} \frac{z-aj}{ak-aj} = \prod_{n \in \mathbb{Z}, n \neq 0} \left( 1+\frac{z/a-k}{n} \right) \notag \\
&= \frac{\sin(\pi(z/a-k))}{\pi(z/a-k)} = \frac{1}{2\pi} \int_{-\pi}^{\pi} d\lambda e^{i\lambda(z/a-k)}, \quad k \in \mathbb{Z}. \label{4.1}
\end{align}
The entire functions (\ref{4.1}) are regard as the limits of polynomials (\ref{1.10}) in the sense \cite{L,KT10}, 
\begin{equation}
\hatPhi(z) = \lim_{L \to \infty} \Phi_{a\mathbb{Z} \cap [-L, L]}^{ak}(z), \quad z \in \mathbb{C}. \label{4.2}
\end{equation}
For $(t, y) \in [0, \infty) \times \mathbb{Z}$, (\ref{4.1}) defines 
\begin{align}
\hatM(t, y) &:= \cS \left[ \left. \hatPhi(W) \right| (t,y) \right] \notag \\
&= \frac{1}{2\pi} \int_{-\pi}^{\pi} d\lambda e^{-i\lambda k}\cS[ \left. e^{i\lambda W/a} \right| (t,y)] \notag \\
&= \frac{1}{2\pi}\int_{-\pi}^{\pi}d\lambda e^{i\lambda(y/a-k)}\exp \left\{ t\left( 1-\cos\frac{\lambda}{a} \right) \right\}, \quad k \in \mathbb{Z}. \label{4.3}
\end{align}
It is readily to see that if $V(t)$, $t \in [0, \infty)$ is a RW, $\hatM(t, V(t))$, $k \in \mathbb{Z}$ are $\mathcal{F}_t$-martingales. 

Let $V_j(t)$, $t \in [0, \infty)$, $j \in \mathbb{Z}$ be an infinite sequence of independent RWs. Then we have an infinite sequence of independent $\mathcal{F}_t$-martingales, 
\begin{equation} \label{4.4}
\hatM(t, V(t)), \quad k \in \mathbb{Z}, \quad t \in [0, \infty)
\end{equation}
for each $a \in \{ 2, 3, \ldots \}$ and $k \in \mathbb{Z}$. We write the labeled configuration $(aj)_{j \in \mathbb{Z}}$ with an infinite number of particles as $a\mathbb{Z}$, and under $\rP_{a\mathbb{Z}}$, $V_j(0)=aj$, $j \in \mathbb{Z}$. Then, for any $t \in [0, \infty)$, 
\begin{align}
\rE_{a\mathbb{Z}}\left[ \hatM(t, V_j(t)) \right] &= \rE_{a\mathbb{Z}}\left[ \hatM(0, V_j(0)) \right] \notag \\
&= \hatM(0, aj) \notag \\
&= \delta_{j, k}, \quad j, k \in \mathbb{Z}. \label{4.5}
\end{align}
For $n \in \N$, an index set $\{ 1, 2, \ldots, n \}$ is denoted by $\I{n}$. Given $\bd{x}=(x_1, \ldots, x_n) \in \Z^n$, when $\J = \{ j_1, \ldots, j_{n'} \} \subset \I{n}$, $1 \leq j_1 < \cdots < j_{n'} \leq n$, we put $\bd{x}_{\J} = (x_{j_1}, \ldots, x_{j_{n'}})$. 
Fix $N \in \mathbb{N}$. For $\J \subset \I{N}$, defined a determinantal martingale of (\ref{4.4})
\begin{equation} \label{4.6}
\hat{\mathcal{D}}_{a\mathbb{Z}}(t, \bd{V}_{\J}(t)) = \det_{j,k \in \J}[\hatM(t, V_j(t))], \quad t \in [0, \infty). 
\end{equation}
Let $t \in [0, \infty)$, $t \leq T \in [0, \infty)$, $N' \in \mathbb{N}$, $N'<N$, and $F_{N'}$ be a measurable function on $\mathbb{Z}^{N'}$. Then the reducibility
\begin{align}
&\sum_{\J \subset \I{N}, \#\J = N'} \rE_{a\mathbb{Z}}\left[ F_{N'}(\bd{V}_{\J}(t))\hat{\mathcal{D}}_{a\mathbb{Z}}(T, \bd{V}_N(T)) \right] \notag \\
&= \sum_{\J \subset \I{N}, \#\J = N'} \rE_{a\mathbb{Z}}\left[ F_{N'}(\bd{V}_{\J}(t))\hat{\mathcal{D}}_{a\mathbb{Z}}(T, \bd{V}_{\J}(T)) \right] \notag \\
&= \int_{\mathbb{W}_{N'}} \xi^{\otimes N'}_{a\mathbb{Z}}(d\bd{v})\rE_{\bd{v}}\left[ F_{N'}(\bd{V}_{N'}(t))\hat{\mathcal{D}}_{a\mathbb{Z}}(T, \bd{V}_{N'}(T)) \right]. \label{4.7}
\end{align}
holds. The proof is the same as that for Lemma 2.1 in \cite{K.dif}, where the martingale property (\ref{4.5}) plays an essential role. Note that the last expression of (\ref{4.7}) does not change even if we replace $N$ in the LHS by any other integer $\tilde{N}$ with $\tilde{N} > N$. Based on such consistency in reduction of DMRs and the fact (\ref{4.2}), the noncolliding RW with an infinite number of particles started at $\xi_{a\mathbb{Z}}$ is defined as follows \cite{K.dif}. 

\begin{defi} \label{def.4.1}
For each $a \in \{ 2, 3, \ldots \}$, the noncolliding RW started at $\xi_{a\mathbb{Z}}$, denoted as $(\Xi(t), t \in [0, \infty), \P_{\xi_{a\mathbb{Z}}})$, is defined by the following. Let $t \in [0, \infty)$, $t \leq T \in [0, \infty)$. For any $\mathcal{F}(t)$-measurable bounded function $F$, which depends at most $n$ paths of RWs, $n \in \mathbb{N}$, and is symmetric at each time $s \leq t$, $s \in [0, \infty)$, its expectation is given by 
\begin{equation} \label{4.8}
\E_{\xi_{a\mathbb{Z}}}[F(\Xi(\cdot))] = {\rm{E}}_{a\mathbb{Z}} \left[ F\left( \sum_{j=1}^n \delta_{V_j(\cdot)} \right) \hat{\mathcal{D}}_{a\mathbb{Z}}(T,  \bd{V}_{\I{n}}(T)) \right]. 
\end{equation}
\end{defi}
Equation (\ref{4.8}) says that the noncolliding RW has DMR, hence we can characterize this infinite particle system $(\Xi(t), t \in [0, \infty), \P_{\xi_{a\mathbb{Z}}})$ as follows. 
\begin{prop} \label{prop.4.2}
The noncolliding RW, $(\Xi(t), t \in [0, \infty), \P_{\xi_{a\mathbb{Z}}})$, $a \in \{ 2, 3, \ldots \}$ is determinantal with the correlation kernel given by (\ref{4.9}). 
\end{prop}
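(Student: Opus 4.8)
The plan is to treat Proposition \ref{prop.4.2} as the infinite-particle counterpart of Theorem \ref{thm.3.3}, whose proof we can imitate once the infinite configuration has been placed inside the framework of \cite{K.dif}. By Definition \ref{def.4.1} the process $(\Xi(t), t \in [0,\infty), \P_{\xi_{a\Z}})$ is constructed precisely so as to carry a determinantal martingale representation, namely (\ref{4.8}), built from the independent $\mathcal{F}_t$-martingales $\hatM(t,V(t))$, $k \in \Z$, which satisfy the orthonormality relation (\ref{4.5}), $\rE_{a\Z}[\hatM(t,V_j(t))]=\delta_{j,k}$, for all $j,k \in \Z$. I would therefore invoke the general principle of \cite{K.dif} (its Theorem 1.3), by which a process carrying a DMR associated with $(V,\hat{\mathcal{M}}_{a\Z})$ is determinantal with a correlation kernel assembled from the transition probability $p$ and the martingale functions $\hat{\mathcal{M}}_{a\Z}$. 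The device that legitimizes this in the infinite-particle setting is the reducibility (\ref{4.7}), together with the consistency remarked upon just after it, namely that the reduced expression is unchanged when the truncation level $N$ is enlarged; this guarantees that the right-hand side of (\ref{4.8}) is well defined and independent of the auxiliary cut-off.

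Granting this, the kernel has the same shape as (\ref{kernelp}), now with an infinite sum,
\[
{\bf K}(s,x;t,y) = \sum_{j \in \Z} p(s,x|aj)\,\hatMj(t,y) - \1(s>t)\,p(s-t,x|y),
\]
and its identification with (\ref{4.9}) is a direct substitution mirroring the passage from (\ref{kernelp}) to (\ref{1.14}). First I would insert the Bessel representation of Lemma \ref{prob.dens}, $p(s,x|aj)=e^{-s}I_{|x-aj|}(s)$ and $p(s-t,x|y)=e^{-(s-t)}I_{|x-y|}(s-t)$, and the Fourier representation (\ref{4.3}) of $\hatMj(t,y)$. Collecting the scalar prefactors produces an overall gauge factor $e^{t-s}$ multiplying the bracket; since any factor of the form $f(t,y)/f(s,x)$ is irrelevant for a correlation kernel, as was already used in the proof of Theorem \ref{thm.3.3}, it may be discarded, and what remains is exactly (\ref{4.9}).

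The main obstacle is not this algebra but the analytic control of the infinite-particle limit. Two points must be secured. First, the defining sum $\sum_{j \in \Z} I_{|x-aj|}(s)\,\hatMj(t,y)$ must converge: here the oscillatory integral (\ref{4.3}) is bounded uniformly in $j$, since $1-\cos(\lambda/a) \in [0,2]$, while $I_{|x-aj|}(s)$ decays super-exponentially in $|j|$ by (\ref{Idef}), so the series converges absolutely and locally uniformly in $(s,x,t,y)$. Second, and more delicate, one must justify that the determinantal structure of \cite{K.dif} genuinely transfers from the finite reductions to the infinite system; that is, that the Fredholm-determinant expansion (\ref{3.7}) assembled from the truncated DMRs converges term by term to the one built from ${\bf K}$. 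This is where the bulk of the work lies: it rests on the consistency of (\ref{4.7}) and on the approximation (\ref{4.2}), $\hatPhi(z)=\lim_{L\to\infty}\Phi_{a\Z\cap[-L,L]}^{ak}(z)$, and requires an interchange-of-limit argument showing that the finite-$N$ correlation kernels converge locally uniformly to ${\bf K}$ and that this convergence lifts to all spatio-temporal correlation functions. Once this is in hand, the determinantal identity (\ref{3.10}) holds for the infinite system with kernel (\ref{4.9}), completing the proof.
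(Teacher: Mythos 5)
Your argument follows the paper's route exactly: the paper's own proof of Proposition \ref{prop.4.2} is a single line that starts from the kernel $\sum_{j\in\Z}p(s,x|aj)\hatMj(t,y)-\1(s>t)p(s-t,x|y)$ supplied by the DMR of Definition \ref{def.4.1} via the general theory of \cite{K.dif}, and simply discards the irrelevant gauge factor $e^{t-s}$ --- precisely your computation. The additional convergence and consistency checks you outline are not carried out in the paper (they are delegated to Definition \ref{def.4.1}, the reducibility (\ref{4.7}), and the limit (\ref{4.2})), so your write-up is, if anything, more careful than the original.
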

\begin{proof}
We omit the irrelevant factor $e^{t-s}$ in $\sum_{j \in \mathbb{Z}} p(s,x|aj)\hatMj (t, y) - \bd{1}(s>t)p(s-t, x|y)$ and obtain (\ref{4.9}). 
\end{proof}

\subsection{Relaxation phenomenon} 

In order to state the theorem, we define a DPP. 

\begin{defi} \label{def.4.3}
For a given density $0< \rho < 1$, the probability measure $\mu_{\rho}^{\sin}$ on $\Z$ is defined as a DPP with the sine kernel 
\begin{equation*}
\bK_{\rho}^{\sin}(y-x) = \frac{\sin(\rho\pi(y-x))}{\pi(y-x)}.
\end{equation*}
\end{defi}

\begin{theorem} \label{thm.4.4}
For each $a \in \{ 2, 3, \ldots \}$, the process $(\Xi(t), t \in [0, \infty), \P_{\xi_{a\Z}})$ starting from the configuration (\ref{xiaZ}) shows a relaxation phenomenon to the stationary process $(\Xi(t), t \in [0, \infty), \P_{\rho})$ with $\rho=1/a$. The stationary process $(\Xi(t), t \in [0, \infty), \P_{\rho})$ is reversible with respect to $\mu_{\rho}^{\sin}$ and is determinantal with the correlation kernel given by 
\begin{equation}
\label{1.17} {\bf{K}}_{\rho}(t-s, y-x) = \begin{cases} \vspace{12pt}
\displaystyle \int_0^{\rho} du \cos(u\pi(y-x)) e^{-(t-s)\cos u\pi}, &{\rm{if}} \: s<t, \\

\vspace{12pt}

\displaystyle \frac{\sin(\rho\pi(y-x))}{\pi(y-x)}, &{\rm{if}} \: s=t, \\

\displaystyle -\int_{\rho}^{1} du \cos(u\pi(y-x)) e^{-(t-s)\cos u\pi}, &{\rm{if}} \: s>t. \end{cases}
\end{equation}
\end{theorem}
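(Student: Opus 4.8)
The plan is to work entirely from the explicit finite-time correlation kernel $\mathbb{K}_{\xi_{a\Z}}$ of Proposition \ref{prop.4.2}, given in (\ref{4.9}), and to let all time arguments tend to infinity simultaneously. For fixed $s,t\in[0,\infty)$ and $x,y\in\Z$ I would analyse $\lim_{\theta\to\infty}\mathbb{K}_{\xi_{a\Z}}(\theta+s,x;\theta+t,y)$ and show it equals $\bK_{\rho}(t-s,y-x)$ of (\ref{1.17}) with $\rho=1/a$. Since every multitime correlation function of $\Xi(\theta+\cdot)$ is, through the expansion (\ref{3.7}), a Fredholm determinant built from this kernel, pointwise convergence of the kernel together with a uniform bound (to pass the limit through the determinantal expansions) yields convergence of all correlation functions; this is precisely the assertion that $\Xi(\theta+\cdot)$ relaxes to a determinantal process with correlation kernel (\ref{1.17}).

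The computational core is a Fourier rewriting of the first term of (\ref{4.9}). Combining (\ref{1.1}) and (\ref{Bessel}) gives the integral representation $I_{n}(r)=\frac{1}{2\pi}\int_{-\pi}^{\pi}dk\,e^{ikn+r\cos k}$, with which I would express both $I_{|x-aj|}(\theta+s)$ and the factor $\hatMj(\theta+t,y)$ from (\ref{4.3}) as integrals and carry out the sum over $j\in\Z$ via the Poisson identity $\sum_{j\in\Z}e^{-ij(ak+\lambda)}=2\pi\sum_{m\in\Z}\delta(ak+\lambda-2\pi m)$ (the sum converges absolutely because $I_{|x-aj|}(\theta+s)$ decays super-exponentially in $|j|$). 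The constraint $\lambda=-ak+2\pi m\in[-\pi,\pi]$ splits $k\in[-\pi,\pi]$ into $a$ bands indexed by $m$, on each of which $\cos(\lambda/a)=\cos(k-2\pi m/a)$, so that the combined exponent becomes $(\theta+s)\cos k-(\theta+t)\cos(k-2\pi m/a)$. On the central band $m=0$, where $k\in[-\pi/a,\pi/a]$ and $\cos(k-0)=\cos k$, the $\theta$-dependence cancels identically, and this band contributes, for every $\theta$, the $\theta$-independent integral $\frac{1}{2\pi}\int_{-\pi/a}^{\pi/a}dk\,e^{ik(x-y)}e^{(s-t)\cos k}$; the substitution $k=u\pi$ turns this into the principal integrals appearing in (\ref{1.17}).

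The main obstacle is to show that all bands $m\neq 0$ die out as $\theta\to\infty$. On band $m$ the coefficient of $\theta$ in the exponent is $\cos k-\cos(k-2\pi m/a)=-2\sin(k-\pi m/a)\sin(\pi m/a)$, and the decisive point, which uses $a\in\{2,3,\dots\}$ so that each band has width $2\pi/a\le\pi$, is that this coefficient is $\le 0$ throughout the band and vanishes only at the endpoint shared with the central band. Granting this, the band-$m$ integrand is dominated uniformly in $\theta$ by $e^{|s|+|t|}$ on the compact band and tends to $0$ pointwise on its interior, so dominated convergence sends the band-$m$ contribution to $0$. The essential content is that no band produces a growing exponential, which would otherwise preclude the very existence of a limit; I would isolate the required sign inequality as a short trigonometric lemma, noting that $\sin(k-\pi m/a)$ keeps the sign of $\sin(\pi m/a)$ on each band precisely because the half-width $\pi/a\le\pi/2$ prevents $k-\pi m/a$ from crossing a zero of $\sin$ except at the $m=0$ boundary.

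It then remains to recombine the three regimes of (\ref{1.17}) and to read off the equilibrium and its reversibility. For $s<t$ the indicator $\1(\theta+s>\theta+t)$ is absent and the surviving $m=0$ band is exactly the first line of (\ref{1.17}). For $s>t$ the $\theta$-independent term $-I_{|x-y|}(s-t)=-\int_{0}^{1}du\,\cos(u\pi(y-x))e^{(s-t)\cos u\pi}$, again by the integral representation of $I$, combines with the $m=0$ band $\int_{0}^{\rho}(\cdots)\,du$ to produce $-\int_{\rho}^{1}(\cdots)\,du$, the third line. For $s=t$ the surviving band reduces to $\int_{0}^{\rho}\cos(u\pi(y-x))\,du=\sin(\rho\pi(y-x))/(\pi(y-x))$, the sine kernel; hence the single-time marginal relaxes to $\mu_{\rho}^{\sin}$ and the limit kernel $\bK_{\rho}$ depends only on $(t-s,y-x)$, so the limiting process is time-homogeneous and stationary. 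Finally, (\ref{1.17}) is the discrete extended sine kernel whose equal-time reduction is the sine kernel defining $\mu_{\rho}^{\sin}$; reversibility with respect to $\mu_{\rho}^{\sin}$ follows by identifying this limiting dynamics with the reversible equilibrium process for the sine kernel on $\Z$ constructed in \cite{K.non}. This identification, rather than any new estimate, is the second place that requires care.
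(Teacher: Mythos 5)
Your proposal is correct and its overall strategy coincides with the paper's: both start from the kernel (\ref{4.9}), write the Bessel factor and $\hatMj$ as Fourier integrals, resolve the sum over $j$ by a Dirac-comb identity, observe that on the central frequency band the dependence on the common time shift cancels identically and produces the extended sine kernel (\ref{1.17}), and kill the remainder using the negativity of the coefficient of the large time parameter in the exponent. The one genuine difference is organizational, and it works in your favor. The paper first splits its $\theta$-integral into $|\theta|\le\pi$ (exactly your $m=0$ band, giving $G$) and $\pi<|\theta|\le a\pi$ (giving $R$), applies $\sum_{j}e^{-i(\theta+\lambda)j}=2\pi\delta(\theta+\lambda)$ only to $G$, and then bounds $R$ via the pointwise inequality $e^{s\{\cos(\theta/a)-\cos(\lambda/a)\}}<1$; but $R$ as written still contains the non-absolutely-convergent sum $\sum_{j}e^{-i(\theta+\lambda)j}$, which for $\pi<|\theta|\le a\pi$ concentrates on the hyperplanes $\theta+\lambda=2\pi m$, $m\neq0$ --- i.e.\ on precisely the bands you isolate --- so the paper's estimate is really a bound on your band contributions. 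Your version, performing the Poisson summation globally and then using $\cos k-\cos(k-2\pi m/a)=-2\sin(k-\pi m/a)\sin(\pi m/a)\le 0$ on band $m$ (which is where $a\ge 2$ enters) together with dominated convergence, makes that step airtight. Your recombination of the surviving band with $-\1(s>t)I_{|x-y|}(s-t)=-\1(s>t)\int_0^1 du\,\cos(u\pi(y-x))e^{-(t-s)\cos u\pi}$ into the three cases of (\ref{1.17}) is the same computation the paper performs up to the irrelevant conjugation factor $e^{t-s}$, and your handling of reversibility --- deferring to the identification with the equilibrium sine-kernel process of \cite{K.non} --- is no less complete than the paper's, which asserts it without a separate argument.
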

\begin{proof}
We rewrite (\ref{1.1}) as follows, 
\begin{align}
p(t, y|x) &= \frac{1}{2\pi}\int_{-\pi}^{\pi}dk e^{ik(y-x)} e^{-(1-\cos k)t} \notag \\
&= \frac{1}{2\pi a} \int_{-a\pi}^{a\pi} d\theta e^{i\theta(y-x)/a} \exp\left\{ -\left( 1-\cos \left( \frac{\theta}{a} \right) \right) t \right\} \notag \\
&= \int_0^1 du \cos(u\pi(y-x)) \exp\left\{ -\left( 1-\cos(u\pi)  \right) t \right\} \label{4.10}
\end{align}
for $t \in [0, \infty)$, where $a \in \mathbb{N}$. Then we have 
\begin{align*}
&\sum_{j \in \mathbb{Z}}p(s, x|aj) \hatMj(t, y) \\
&= \frac{1}{4\pi^2a} \sum_{j \in \mathbb{Z}} \int_{-a\pi}^{a\pi}d\theta \int_{-\pi}^{\pi}d\lambda e^{i(\theta x+\lambda y)/a} e^{-i(\theta+\lambda)j} \exp\left\{ -s\left( 1-\cos ( \theta/a ) \right) +t( 1-\cos(\lambda /a) \right\}.
\end{align*}

For (\ref{4.9}), $(s, x), (t, y) \in [0, \infty) \times \mathbb{Z}$, we put
\begin{equation*} \label{4.11}
\mbK_{\xi_{a\Z}}(s,x;t,y)+{\bf{1}}(s>t)p(s-t,x|y)=G(s,x;t,y)+R(s,x;t,y)
\end{equation*}
with 
\begin{align*} 
G(s,x;t,y) &= \frac{1}{4\pi^2a} \int_{|\theta| \leq \pi} d\theta \\
&\quad \times \int_{|\lambda| \leq \pi} d\lambda e^{i(\theta x+\lambda y)/a + (t-s)(1-\cos(\lambda /a)) } \sum_{j \in \mathbb{Z}} e^{-i(\theta+\lambda)j} e^{s\{ \cos(\theta /a)-\cos(\lambda /a) \}}, 
\end{align*}
and 
\begin{align*} 
R(s,x;t,y) &= \frac{1}{4\pi^2a} \sum_{j \in \Z} \int_{\pi < |\theta| \leq  a \pi} d\theta \\
&\quad \times \int_{|\lambda| \leq \pi} d\lambda e^{i(\theta x+\lambda y)/a + (t-s)(1-\cos(\lambda /a)) } e^{-i(\theta+\lambda)j} e^{s\{ \cos(\theta /a)-\cos(\lambda /a) \}}.
\end{align*}
Since $\sum_{j \in \Z}e^{-i(\theta+\lambda)j}= 2\pi\delta(\theta+\lambda)$ for $\theta, \lambda \in (-\pi, \pi]$, we obtain 
\begin{equation*} \label{4,14}
G(s,x; t,y) = \frac{1}{2\pi a} \int_{-\pi}^{\pi} d\lambda e^{i\lambda(y-x)/a+(t-s)(1-\cos(\lambda /a))} =: \mathcal{G}(t-s, y-x).
\end{equation*}
On the other hand, when $\pi < |\theta| \leq a\pi$ and $|\lambda| \leq \pi$, $\cos(\theta /a) < \cos(\lambda /a)$. We get 
\begin{equation*} 
e^{ \cos(\theta /a)-\cos(\lambda /a) } < 1
\end{equation*}
Then for any fixed $s, t \in (0, \infty)$, 
\begin{equation*}
|R(s+\tau, x; t+\tau,y)| \to 0 \quad \text{as} \: \tau \to \infty
\end{equation*}
uniformly on any $(x, y) \in \Z^2$ and it implies
\begin{equation}
\label{kconv} \mbK_{\xi_{a\Z}}(s+\tau, x; t+\tau, y) \to \bK_{\rho}(t-s, y-x) \quad \text{as} \: \tau \to \infty, 
\end{equation}
where
\begin{align*}
\bK_{\rho}(t-s, y-x) &= \mathcal{G}(t-s, y-x) - {\bf{1}}(s>t)p(s-t, x|y) \notag \\
&= \frac{1}{2\pi} \int_{-\rho \pi}^{\rho \pi} d\lambda e^{i\lambda(y-x)+(t-s)(1-\cos\lambda)}-{\bf{1}}(s>t)p(s-t, x|y). 
\end{align*}
This is equal to (\ref{1.17}) up to an irrelevant factor $e^{t-s}$. 
Here we remark $\rho = 1/a$ gives the particle density on $\mathbb{Z}$. 
The convergence of the correlation kernel (\ref{kconv}) implies the convergence of generating function for correlation functions $\Psi_{\xi_{a\Z}}^{\bd{t}}[\bd{f}]$, and thus the convergence of the determinantal process to an equilibrium determinantal process. Thus the proof is completed. 
\end{proof}

This is an example of relaxation phenomena discussed in \cite{K.dif,K.non,KT09,KT10,KT11}. 

\begin{remark}
If the initial configuration is $\xi_{\Z}$, all sites are occupied and there occurs no time-evolution in the present system. In this case we have $\mathbb{K}_{\xi_{\Z}}(s,x;t,y)=I_{|x-y|}(s-t)\1(s \leq t)$. Since $I_n(0)=\delta_{n, 0}$, $n \in \N$, it gives a trivial result, $\rho_{\xi_{\Z}} \equiv 1$. 
\end{remark}


\vspace{12pt}

\noindent
{\bf{Acknowledgements}} \quad The present author would like to express his thanks to Makoto Katori for his valuable suggestion and constant encouragement. He also would like to thank Hideki Tanemura for his many valuable comments. 




\begin{thebibliography}{99}
\bibitem{BOO00} 
Borodin, A., Okounkov, A., Olshanski, G.: 
Asymptotics of Plancherel measures for symmetric groups. J. Amer. Math. Soc. {\bd 13}, 481-515 (2000)

\bibitem{DFL13}
Decreusefond, L., Flint, I., Low, K. C.:
Perfect simulation of determinantal point processes.
{\sf arXiv:math.PR/1311.1027}

\bibitem{Dys62}
Dyson, F. J. :
A Brownian-motion model for the eigenvalues of a random matrix.
J. Math. Phys. {\bf 3}, 1191-1198 (1962)

\bibitem{EK08} 
Eichelsbacher, P., K\"onig, W.:
Ordered random walks.
Electron. J. Probab. {\bf 13}, no.46, 1307-1336 (2008)

\bibitem{For10}
Forrester, P. J.:
Log-gases and Random Matrices.
London Mathematical Society Monographs, Princeton:
Princeton University Press, 2010

\bibitem{Gin65}
Ginibre, J.:
Statistical ensembles of complex, quaternion, and real matrices.
J. Math. Phys. {\bf 6}, 440-449 (1965)

\bibitem{Gol10}
Goldman, A.:
The Palm measure and the Voronoi tessellation 
for the Ginibre process.
Ann. Appl. Probab. 
{\bf 20}, 90-128 (2010)

\bibitem{Joh01} 
Johansson, K.: 
Discrete orthogonal polynomial ensembles and the Plancherel measure. Ann. Math. {\bf 153}, 259-296 (2001)

\bibitem{K.dif} 
Katori, M. : 
Determinantal martingales and noncolliding diffusion processes.
Stochastic Process. Appl. {\bf 124}, 3724-3768 (2014)

\bibitem{K.non} 
Katori, M.: 
Determinantal martingales and correlations of noncolliding random walks; \textsf{arXiv:math.PR/1307.1856}

\bibitem{KT09} 
Katori, M., Tanemura, H.: 
Zeros of Airy function and relaxation process. J. Stat. Phys. {\bf{136}}, 1177-1204 (2009)

\bibitem{KT10} 
Katori, M., Tanemura, H.: 
Non-equilibrium dynamics of Dyson's model with an infinite number of particles. Commun. Math. Phys. {\bf{293}}, 469-497 (2010)

\bibitem{KT11} 
Katori, M., Tanemura, H.: 
Noncolliding squared Bessel processes. J. Stat. Phys. {\bf{142}}, 592-615 (2011)

\bibitem{KT13} 
Katori, M., Tanemura, H.: 
Complex Brownian motion representation of the Dyson model. Electron. Commun. Probab. {\bf{18}}, no.4, 1-16 (2013)

\bibitem{Koe05}
K\"onig, W.:
Orthogonal polynomial ensembles in probability theory.
Probab. Surveys {\bf 2}, 385-447 (2005)

\bibitem{KOR} 
K\"{o}nig, W., O'Connell, N., Roch, S.: 
Non-colliding random walks, tandem queues, and discrete orthogonal polynomial ensembles. Electron. J. Probab. {\bf{7}}, 1-24 (2002)

\bibitem{KT12}
Kulesza, A., Taskar, B.:
Determinantal point processes for machine learning.
Foundations and Trends in Machine Learning, vol. 5, issue 2-3, pp.123-286, 2012

\bibitem{L} 
Levin, B. Ya.: 
Lectures on Entire Functions. Translations of Mathematical Monographs, {\bf{150}}, Province R. I. : Amer. Math. Soc., 1996

\bibitem{Meh04}
Mehta, M. L.:
Random Matrices.  third ed., Amsterdam: Elsevier, 2004

\bibitem{MS12}
Miyoshi, N., Shirai, T.:
A cellular network model with Ginibre configured base stations.
Adv. Appl. Probab. {\bf 46}, 832-845 (2014)


\bibitem{Osa12}
Osada, H.: 
Infinite-dimensional stochastic differential equations related to random matrices. Probab. Theory Related Fields {\bf 153}, 471-509 (2012)

\bibitem{Osa13a}
Osada, H.: 
Interacting Brownian motions in infinite dimensions with logarithmic interaction potentials. Ann. Probab. {\bf 41}, 1-49 (2013)

\bibitem{Osa13b}
Osada, H.: 
Interacting Brownian motions in infinite dimensions with logarithmic interaction potentials II: Airy random point field. Stochastic Process. Appl. {\bf 123}, 813-838 (2013)

\bibitem{Sat99} 
Sato, K.: 
L\'{e}vy processes and Infinitely Divisible Distributions. Cambridge, U.K.: Cambridge University Press, 1999

\bibitem{ST03}
Shirai, T., Takahashi, Y.: 
Random point fields associated with certain
Fredholm determinants I:
fermion, Poisson and boson point process.
J. Funct. Anal.
{\bf 205}, 414-463 (2003)

\bibitem{Sos00}
Soshnikov, A. : 
Determinantal random point fields.
Russian Math. Surveys {\bf 55}, 923-975 (2000)

\bibitem{Wat44} 
Watson, G.N.: 
A Treatise on the Theory of Bessel Functions, 2nd ed., Cambridge, U.K: Cambridge Univ. Press., 1944
\end{thebibliography}
\end{document}